
\documentclass[11pt,b5paper,twoside, headrule]{amsart}

\usepackage{amsfonts, amsmath, amssymb,latexsym}
\usepackage{epsfig}
\usepackage[curve]{xy}
\usepackage{algorithmic}
\usepackage{algorithm}
\usepackage{enumerate}
\usepackage{framed}
\usepackage{hyperref}


\headsep=1truecm \headheight=0pt \topmargin=0pt \oddsidemargin=40pt
\evensidemargin=25pt \textwidth=13.5truecm \textheight=19.5truecm

\footskip=10mm\parskip 0.2cm\addtocounter{page}{0}
\setlength{\arraycolsep}{1pt}

\newtheorem{thm}{Theorem}[section]
\newtheorem{cor}[thm]{Corollary}
\newtheorem{lem}[thm]{Lemma}
\newtheorem{prop}[thm]{Proposition}
\newtheorem{conj}[thm]{Conjecture}

\theoremstyle{definition}
\newtheorem{defn}[thm]{Definition}

\theoremstyle{remark}

\numberwithin{equation}{section}


\title[On primes of the form $n^2+n+p$]{Primes of the form $n^2+n+p$ have density 1}

\author{\sc Iv\'an Blanco-Chac\'on}
\address{School of Mathematics and Statistics\\
University College Dublin\\
Belfield, Dublin 4\\
Ireland}
\email{ivan.blanco-chacon@ucd.ie}

\author{\sc Gary McGuire}
\address{School of Mathematics and Statistics\\
University College Dublin\\
Belfield, Dublin 4\\
Ireland}
\email{gary.mcguire@ucd.ie}

\author{\sc Oisin Robinson}
\address{School of Mathematics and Statistics\\
University College Dublin\\
Belfield, Dublin 4\\
Ireland}
\email{oisin.robinson@ucd.ie}

\begin{document}
\renewcommand\baselinestretch{1.2}
\renewcommand{\arraystretch}{1}
\def\base{\baselineskip}
\font\tenhtxt=eufm10 scaled \magstep0 \font\tenBbb=msbm10 scaled
\magstep0 \font\tenrm=cmr10 scaled \magstep0 \font\tenbf=cmb10
scaled \magstep0


\def\evenhead{{\protect\centerline{\textsl{\large{I. Blanco}}}\hfill}}

\pagestyle{myheadings} \markboth{\evenhead}{\oddhead}

\thispagestyle{empty}

\maketitle

\begin{abstract}
We consider the representation of primes in the form $n^2+n+p$, where $n$ is a positive integer and $p$ is a prime.
We prove that a subset of the primes having density 1 is representable in this form. 
We conjecture that every prime number $\geq 5$ is expressible in the form $n^2+n+p$ where $p$ is a twin prime, and we show that this conjecture implies the existence of infinitely many twin primes. 
\end{abstract}

\bigskip
\section{Introduction}

Our main result in this paper is the following theorem.

\begin{thm}\label{main}
There exists a density 1 subset $S$ of the prime numbers  such that for every $q\in S$, there exists a prime $p$ and an integer  $n\geq 1$ such that $q=n^2+n+p$.
\end{thm}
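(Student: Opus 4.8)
The plan is to prove the theorem by the Hardy--Littlewood circle method, by showing that the number of integers $q\le x$ that \emph{cannot} be written as $p+n(n+1)$ with $p$ prime and $n\ge 1$ is $o(x/\log x)$. Granting this, take $S$ to be the set of primes that \emph{are} representable in the required form: every non-representable prime $\le x$ lies in the exceptional set just mentioned, which has size $o(x/\log x)=o(\pi(x))$, so $S$ has density $1$ in the primes and the theorem follows. Note that $q=n^2+n+p$ with $n\ge 1$ says precisely that $q-p$ is one of the pronic numbers $2,6,12,20,\dots$, so this is an additive problem of Goldbach / ``prime-plus-square'' type; a preliminary local analysis should show there is no congruence obstruction once $q\ge 5$, since for each prime $\ell$ the residues of $n(n+1)$ modulo $\ell$ leave enough classes uncovered. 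Concretely, set $N=\lfloor\sqrt x\rfloor$ and
\[
S(\alpha)=\sum_{p\le x}(\log p)\,e(p\alpha),\qquad T(\alpha)=\sum_{1\le n\le N}e\big((n^2+n)\alpha\big),
\]
so that $R(q):=\int_0^1 S(\alpha)T(\alpha)e(-q\alpha)\,d\alpha$ is a positively weighted count of the representations $q=p+n(n+1)$, $n\ge 1$, and hence $R(q)>0$ precisely when $q$ has the desired form (the truncation $n\le N$ discards nothing for $q\le x$). Dissect $[0,1)$ into major arcs $\mathfrak M$ around fractions $a/b$ with $b\le P:=(\log x)^{C}$ and minor arcs $\mathfrak m=[0,1)\setminus\mathfrak M$.

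On $\mathfrak M$ one evaluates the contribution to $R(q)$ in the standard way, via the Siegel--Walfisz theorem for $S(\alpha)$ and the classical Gauss-sum evaluation of the complete quadratic exponential sum for $T(\alpha)$; this produces a main term of size $\asymp\mathfrak S(q)\sqrt q$ with an error saving a power of $\log x$, where $\mathfrak S(q)=\prod_\ell\sigma_\ell(q)$ is the singular series. One then checks that $\sigma_\ell(q)>0$ for every prime $\ell$, that the Euler product converges, and hence that $\mathfrak S(q)\gg(\log\log q)^{-1}$ uniformly for $q\ge 5$, so that the major-arc main term genuinely dominates.

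The exceptional set is controlled by Bessel's inequality applied to $S(\alpha)T(\alpha)\mathbf 1_{\mathfrak m}(\alpha)$: the number of $q\le x$ for which $\big|\int_{\mathfrak m}S(\alpha)T(\alpha)e(-q\alpha)\,d\alpha\big|>\sqrt x/\log x$ is at most $(\log x)^2x^{-1}\int_{\mathfrak m}|S(\alpha)T(\alpha)|^2\,d\alpha$, and $\int_{\mathfrak m}|ST|^2\le\big(\sup_{\mathfrak m}|T|\big)^2\int_0^1|S|^2\ll\big(\sup_{\mathfrak m}|T|\big)^2\,x\log x$. The crux --- and the step I expect to be the main obstacle --- is the bound for $\sup_{\mathfrak m}|T(\alpha)|$: because $T$ is a quadratic Weyl sum of the comparatively short length $N=\sqrt x$, crude Weyl differencing loses a divisor-function factor $x^{o(1)}$ that would swamp any saving of a power of $\log x$, so one must instead use the sharp bound $|T(\alpha)|\ll\big(Nb^{-1/2}+b^{1/2}\big)(\log 2b)^{O(1)}$ obtained by splitting $T$ into residue classes modulo $b$ and evaluating the resulting Gauss sums, valid when $\alpha$ has a reduced rational approximation $a/b$ with $b$ in the range relevant to $\mathfrak m$. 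Since $b>P$ there, this gives $\sup_{\mathfrak m}|T|\ll\sqrt x\,(\log x)^{-C/2+O(1)}$, whence $\int_{\mathfrak m}|ST|^2\ll x^2(\log x)^{-A}$ for $C$ large (any fixed $A$), so the exceptional set has size $\ll x(\log x)^{2-A}=o(x/\log x)$, as required. What remains is largely routine bookkeeping --- reconciling the minor-arc denominator range with both the Gauss-sum bound for $T$ and the Siegel--Walfisz range for $S$, and a dyadic splitting to cover all $q\le x$; the two points calling for genuine care are the minor-arc estimate just described and the lower bound for the singular series $\mathfrak S(q)$.
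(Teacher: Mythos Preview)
Your approach is correct and takes a genuinely different route from the paper's, although both invoke the circle method.

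The paper does not attack the representation problem for individual $q$ at all. Instead it proves the variance estimate (Theorem~\ref{r1})
\[
\sum_{\substack{\kappa\le y\\ \mu^2(\kappa)=1}}\Bigl|\sum_{n\le x}\Lambda(n^2+n+p)-\tfrac{x}{2}S(\kappa)\Bigr|^2\ll \frac{yx^2}{(\log x)^B},
\]
an average over the \emph{shift} $p$ (via $\kappa=4p-1$), following the Baier--Zhao argument for primes in quadratic progressions on average. Here both the ``prime'' variable and the quadratic variable run up to about $x$, and it is the extra average over $\kappa$ that makes the minor-arc and tail terms manageable; the density statement is then read off from this second-moment bound together with the lower bound $S(\kappa)\gg 1/\log x$ and Mirsky's lemma.

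You instead treat the binary additive problem $q=p+n(n+1)$ head-on: a long prime sum $S$ of length $x$ against a \emph{short} quadratic sum $T$ of length $N=\sqrt{x}$, and you control the exceptional set of $q$'s by Bessel on the minor arcs. This is the classical Davenport--Heilbronn/Miech route for ``prime plus square'', which carries over verbatim since $4(n^2+n)+1=(2n+1)^2$. Your identification of the crux is exactly right: crude Weyl differencing on $T$ would lose an $x^{o(1)}$, so one must use the Gauss-sum bound $|T(\alpha)|\ll Nb^{-1/2}+b^{1/2}$; taking the Dirichlet parameter $Q=x/P$ keeps $P<b\le x/P$ on $\mathfrak m$, and both terms are then $\le\sqrt{x/P}$, giving the claimed $\sup_{\mathfrak m}|T|\ll\sqrt{x}\,(\log x)^{-C/2+O(1)}$.

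What each buys: the paper's variance bound yields more than existence --- it gives, on average over $p$, the expected \emph{asymptotic number} of primes of the form $n^2+n+p$. Your argument is more economical for the theorem as stated and targets the representable $q$'s directly. One small correction: the uniform lower bound $\mathfrak S(q)\gg(\log\log q)^{-1}$ is too optimistic. The local factor at an odd prime $\ell$ is $1-\bigl(\tfrac{4q+1}{\ell}\bigr)/(\ell-1)$, so $\mathfrak S(q)$ is a bounded constant times $L(1,\chi_{4q+1})^{-1}$; unconditionally one only has $\mathfrak S(q)\gg 1/\log q$ (from $L(1,\chi)\ll\log q$), which is also what the paper uses. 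This weaker bound is entirely sufficient for your argument, since the major-arc main term $\gg\sqrt{x}/\log x$ still dominates the minor-arc threshold.
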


While Dirichlet's Theorem asserts that for each relatively prime pair $a,b\in\mathbb{N}$ the sequence $\{an+b\}_{n\geq 1}$ contains infinitely many prime numbers, the question of how often an irreducible quadratic polynomial with integer coefficients assumes a prime value when evaluated at natural numbers is still an open problem. In this regard,  Buniakowski's conjecture is well known.

\begin{conj}[Buniakowski] Let $f(x)\in\mathbb{Z}[x]$ be a quadratic irreducible polynomial such that the sequence $\{f(n)\}_{n\geq 1}$ does not have a common factor. Then, there exist infinitely many primes in the sequence.
\end{conj}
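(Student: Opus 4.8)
The plan is to attack this by sieve theory, modelled on Iwaniec's work on prime values of quadratic polynomials. Write $f(x)=ax^2+bx+c$ with $a>0$, $\gcd(a,b,c)=1$ and $f$ irreducible, and set $\omega(\ell)=\#\{n \bmod \ell : \ell \mid f(n)\}$, so $\omega(\ell)\in\{0,1,2\}$. The hypothesis that $\{f(n)\}_{n\ge 1}$ has no fixed prime divisor means precisely that $\omega(\ell)<\ell$ for every $\ell$, which keeps all the local densities nondegenerate. The Bateman--Horn heuristic then predicts
\begin{equation}
\#\{n \le x : f(n)\ \text{prime}\}\ \sim\ \frac{1}{2}\,\mathfrak{S}(f)\,\frac{x}{\log x},
\end{equation}
where the singular series $\mathfrak{S}(f)=\prod_{\ell}\bigl(1-\omega(\ell)/\ell\bigr)\bigl(1-1/\ell\bigr)^{-1}$ converges to a positive constant under the stated hypotheses (the factor $\tfrac12$ reflects $\deg f=2$, since $\log f(n)\sim 2\log x$). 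A genuine proof must replace this conjectural asymptotic by a proven positive lower bound for the left-hand side.

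The first phase is to sift the multiset $\mathcal{A}=\{f(n):n\le x\}$ by the primes $\ell\le z=x^{\theta}$. For $\ell\nmid 2aD_f$ one has $\omega(\ell)=1+\chi_{D_f}(\ell)$, where $\chi_{D_f}$ is the quadratic character of the discriminant; since $\chi_{D_f}$ is equidistributed, $\omega(\ell)$ averages to $1$, so the relevant object is the \emph{linear} (dimension-one) sieve. The analytic input is a level-of-distribution estimate: writing $\#\{n\le x : d\mid f(n)\}=\frac{\omega(d)}{d}\,x+r_d$ with $\omega$ multiplicative, one needs $\sum_{d\le D}|r_d|$ small. Here $|r_d|\ll\omega(d)$ and $\sum_{d\le D}\omega(d)\ll D$, so one may take $D=x^{1-\varepsilon}$, i.e.\ $\theta$ arbitrarily close to $1$. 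This level is far stronger than anything available in the additive pronic problem, precisely because $f$ is a fixed polynomial and the congruence sums can be completed exactly.

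With the Rosser--Iwaniec linear-sieve weights and this near-optimal level of distribution, the best unconditional target of the first phase is the $P_2$ theorem: there are infinitely many $n$ for which $f(n)$ has at most two prime factors. This is Iwaniec's theorem, obtained by combining the linear sieve with a bilinear switching (reflection) argument, and it already covers cases such as $n^2+1$. The hard part, and the reason the conjecture remains open, is the passage from $P_2$ to $P_1$, that is, from almost-primes to primes. This is the \emph{parity obstruction}: a sieve sensitive only to the multiplicative structure of $f(n)$ cannot distinguish an even from an odd number of prime factors, so it cannot isolate the primes even when $\theta\to 1$. Overcoming it requires an arithmetically meaningful bilinear (Type~II) estimate for sums $\sum_{m,k}\alpha_m\beta_k\,\mathbf{1}[\,f\text{-value}=mk\,]$, of the kind Friedlander--Iwaniec produced for $x^2+y^4$ and Heath-Brown for $x^3+2y^3$, both of which exploit a second variable. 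A single quadratic in one variable offers no such factorization, and supplying a parity-breaking input in this setting is exactly the step I do not expect to carry out: it lies beyond the reach of present methods, which is why Buniakowski's conjecture stands open.
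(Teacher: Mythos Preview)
The statement you are addressing is Buniakowski's conjecture, which the paper records as an \emph{open conjecture} in the introduction and never attempts to prove; indeed, the paper explicitly remarks that it is unknown whether a polynomial satisfying Buniakowski's hypotheses produces even a single prime. So there is no paper proof to compare your attempt against.

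Your write-up is not a proof either, and you acknowledge this honestly: you sketch the sieve-theoretic framework (linear sieve, level of distribution near $1$, Iwaniec's $P_2$ result), correctly identify the parity obstruction as the barrier to passing from $P_2$ to primes, and conclude that supplying a parity-breaking bilinear input for a single-variable quadratic ``lies beyond the reach of present methods.'' That diagnosis is accurate and well articulated, but it is an explanation of \emph{why the conjecture is open}, not a proof. Since the paper likewise offers no proof, there is nothing further to compare; just be aware that what you have written is expository context rather than a proof proposal.
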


However,  it is even unknown if a general polynomial meeting the conditions as in Buniakowski's conjecture will always produce at least one prime number in the sequence $\{f(n)\}_{n\geq 1}$.

Notice that each polynomial $f_p(x)=x^2+x+p$, for primes $p\geq 3$, satisfies the conditions in Buniakowski's conjecture. 
Theorem \ref{main} says that a density 1 set of primes occur in the 
sequences $\{f_p(n)\}_{n\geq 1}$ as we range over all primes $p\geq 3$. Our proof also gives a lower bound for the number of primes in the sequence $\{f_p(n)\}_{n\geq 1}$ for each 
$p$. 
Such a lower bound was also given by Granville and Mollin  \cite{granville}; 
however our result uses a different region
and gives control of the error term which allows us to prove
Theorem \ref{main}.



Sun \cite{sun}  conjectures that any odd integer larger than 3 can be written in the form $n^2+n+p$, where $p$
is a prime. 
Theorem \ref{main} can be seen as evidence towards Sun's conjecture.

Our result can also be seen as evidence towards the following conjecture\footnote{This conjecture arose from an entry into the Ireland BT Young Scientist Competition} which originally motivated this work.

\begin{conj}\label{conj}
For every prime number $q\geq 5$, there exists a twin prime $p$ and an integer  $n\geq 1$ such that 
$q=n^2+n+p$.
\end{conj}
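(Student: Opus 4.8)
The plan is to fix a prime $q \geq 5$ and to count the integers $n$ with $1 \leq n$ and $n^2+n < q$ for which $p = q - n^2 - n$ is a twin prime, i.e. for which $p$ is prime and at least one of $p+2$, $p-2$ is prime as well. Writing $\nu(q)$ for this count, Conjecture~\ref{conj} is exactly the assertion that $\nu(q) \geq 1$ for every prime $q \geq 5$. Since $n^2+n < q$ forces $n < \sqrt{q}$, there are about $\sqrt{q}$ admissible values of $n$, and the numbers $q - n^2 - n = q - n(n+1)$ are the shifts of $q$ by the distinct oblong numbers $n(n+1)$. The strategy is therefore to show that among these $\sim \sqrt{q}$ candidate values at least one is a twin prime: for small $q$ this is a finite verification, while for large $q$ it must come from an asymptotic lower bound of the shape $\nu(q) \gg \sqrt{q}/(\log q)^2$.

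To produce such a lower bound I would model $\nu(q)$ by a weighted sum $\sum_{n} \Lambda(q - n(n+1))\,\Lambda(q - n(n+1) - 2)$ and attack it by the Hardy--Littlewood circle method or, more realistically, by a lower-bound sieve (a Selberg or Rosser--Iwaniec sieve with bilinear corrections). The expected main term is a singular series $\mathfrak{S}(q)$ times $\sqrt{q}/(\log q)^2$, where $\mathfrak{S}(q)$ encodes the local densities of solutions to the congruences $q \equiv n(n+1) \pmod{\ell}$ and $q \equiv n(n+1)+2 \pmod{\ell}$ over primes $\ell$. A necessary preliminary step is to verify that $\mathfrak{S}(q) > 0$ for every prime $q \geq 5$, i.e. that there is no local obstruction modulo any small prime; this is where the hypothesis $q \geq 5$ and the admissibility of the pair $\{p, p+2\}$ enter, and it should be a routine but essential check.

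The decisive difficulty, and the reason this remains a conjecture rather than a theorem, is the \emph{parity problem} of sieve theory: an unweighted lower-bound sieve cannot by itself detect integers with exactly two prime factors, and a fortiori cannot force the simultaneous primality of the pair $q-n(n+1)$ and $q-n(n+1)-2$. One therefore cannot at present convert the positive main term $\mathfrak{S}(q)\,\sqrt{q}/(\log q)^2$ into an unconditional lower bound for genuine twin primes in this thin quadratic family. The obstruction is moreover not merely technical: as is shown later in this paper, Conjecture~\ref{conj} implies the infinitude of twin primes, so any unconditional proof of the statement for every $q$ would in particular resolve the twin prime conjecture. The realistic outcome of this plan is thus a heuristic or conditional result (for instance assuming a Hardy--Littlewood $k$-tuple estimate with a power-saving error term, uniform in the parameter $q$), together with the unconditional density-$1$ statement already recorded as Theorem~\ref{main}, which weakens ``twin prime $p$'' to ``prime $p$'' and ``every $q$'' to ``a density-$1$ set of $q$''.
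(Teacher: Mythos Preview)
The statement you were asked to prove is labelled \emph{Conjecture} in the paper, and the paper makes no attempt to prove it. It is presented as an open problem motivating the work; the paper's actual contributions are Theorem~\ref{main} (the density-$1$ relaxation with ``prime'' in place of ``twin prime'') and Theorem~\ref{implica} (that Conjecture~\ref{conj} implies $\pi_2(x)\gg\sqrt{x}/\log x$, hence infinitely many twin primes). So there is no ``paper's own proof'' to compare against.

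Your proposal is not a proof but an honest and accurate diagnosis of why no proof is currently available, and in that capacity it is correct. You set up the counting problem properly, identify the expected main term of order $\sqrt{q}/(\log q)^2$, and correctly name the parity problem as the essential obstruction to turning a sieve heuristic into an unconditional lower bound. Your closing observation---that any unconditional proof would in particular establish the infinitude of twin primes---is exactly the content of Section~\ref{twins} of the paper, and it confirms that the failure here is not a gap in your argument but a genuine open problem.
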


To sum up,
\begin{itemize}
\item Conjecture \ref{conj} states that the set of prime numbers is contained in the union of the sequences $\{f_p(n)\}_{n\geq 1}$ as $p$ runs over all twin primes. 
\item Theorem \ref{main} states that a density 1 subset of primes is contained in the union of the sequences $\{f_p(n)\}_{n\geq 1}$ as $p$ runs over all primes. 
\item Sun's conjecture states that all odd integers larger than 3 are 
in the union of the sequences $\{f_p(n)\}_{n\geq 1}$ as $p$ runs over all primes. 
\end{itemize}

This paper is laid out as follows.
In Section \ref{asy} we prove the following asymptotic formula, which apart from being interesting by itself, will allow us to prove Theorem \ref{main} in Section \ref{wrapup}.

\begin{thm} 
For a prime $p\geq 2$ let $\kappa=\kappa(p)=4p-1$, and fix  $A,B>0$.
For $x,y\in\mathbb{R}$ large enough 
in the region $x^2\log(x)^{-A}\leq y\leq x^2$ we have
$$
\sum_{\begin{array}{c}\kappa\leq y\\ \mu^2(\kappa)=1\end{array}}\left|\sum_{n\leq x}\Lambda(n^2+n+p)-S(\kappa)\frac{x}{2}\right|^2=O\left(\frac{yx^2}{(\log x)^B}\right).
$$
\label{r1intro}
\end{thm}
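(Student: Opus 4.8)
The plan is to recognize the left-hand side as a Bombieri–Vinogradov–type variance estimate for the arithmetic progression-like structure generated by the polynomial $n^2+n+p$. First I would note that $n^2+n+p \pmod{\kappa}$, with $\kappa=4p-1$, is essentially controlled by the value of $2n+1 \pmod \kappa$: completing the square gives $4(n^2+n+p)=(2n+1)^2+(4p-1)=(2n+1)^2+\kappa$, so $n^2+n+p$ running over $n\le x$ corresponds (after the invertible-mod-$\kappa$ substitution $m=2n+1$) to the shifted squares $m^2+\kappa$ with $m$ odd, $m\le 2x+1$. Thus $\sum_{n\le x}\Lambda(n^2+n+p)$ is a sum of $\Lambda$ over values of a quadratic, and the ``expected main term'' $S(\kappa)\frac x2$ is the singular-series prediction (the factor $\tfrac12$ coming from the density of admissible residues, i.e. from $m$ odd). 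The quantity $S(\kappa)$ should be the product over primes $\ell$ of the local densities $\bigl(1-\tfrac1{\ell-1}\bigl(\tfrac{-\kappa}{\ell}\bigr)\bigr)\bigl(1-\tfrac1\ell\bigr)^{-1}$ or similar; I would first pin down that $S(\kappa)$ is exactly the singular series attached to this count, so that the inner difference is genuinely ``actual minus predicted.''

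Next I would set up the standard dispersion/large-sieve machinery. Expand the square and separate the diagonal and off-diagonal contributions; the cross term with $S(\kappa)\tfrac x2$ is handled by an unconditional lower-order estimate for $\sum_{\kappa\le y}S(\kappa)$ (a multiplicative-function average, evaluated by a Dirichlet-series/Selberg–Delange or elementary convolution argument), while the main work is bounding $\sum_{\kappa\le y}\bigl|\sum_{n\le x}\Lambda(n^2+n+p)\bigr|^2$ and showing it matches. The key analytic input is a Bombieri–Vinogradov theorem for the sequence $\Lambda$ along shifted squares, or equivalently for $\Lambda(n)$ weighted by the number of representations $n=m^2+\kappa$. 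Since $y$ can be as large as $x^2$ — i.e. $\kappa$ ranges up to the square of the length of the $n$-sum — this is a ``moduli going up to $x^{2-\varepsilon}$'' range; what makes it tractable is that we only need the $L^2$ (variance) statement, not individual estimates, so the large sieve inequality for the relevant Dirichlet polynomials, combined with a Vaughan-type (or Heath-Brown identity) decomposition of $\Lambda$ into bilinear (Type I / Type II) sums, should give the bound $O\!\left(\tfrac{yx^2}{(\log x)^B}\right)$ for each fixed $B$, in the stated range $x^2(\log x)^{-A}\le y\le x^2$.

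Concretely, after the Vaughan decomposition I would reduce to bounding, for Type I sums, expressions like $\sum_{\kappa\le y}\bigl|\sum_{d\le D}a_d\sum_{n\le x,\ d\mid n^2+n+p}1 - \text{main}\bigr|^2$, which by switching the order of summation becomes a sum over $d$ and pairs of residues mod $d$ of character-sum/equidistribution errors, controlled by Weil bounds for quadratic congruences and then summed trivially since the $d$-range is short relative to $x$. For Type II (bilinear) sums $\sum \alpha_m\beta_k$ with $mk=$ argument, I would apply Cauchy–Schwarz and the large sieve in the $\kappa$-aspect, using that $\kappa\le x^2$ gives enough room: the large sieve constant is $\asymp (y + (\text{polynomial length})^2)$, and here both are $\asymp x^2$, which after dividing by the $\log$-power savings coming from the cancellation in $\Lambda$ yields the claim. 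I expect the \textbf{main obstacle} to be precisely this Type II estimate in the full range up to $y=x^2$: getting a power of $\log x$ saving uniformly requires carefully exploiting the bilinear structure together with the large sieve, and keeping the singular series $S(\kappa)$ correctly aligned with the main term throughout the decomposition (each piece of the Vaughan identity contributes its own piece of $S(\kappa)$, and these must be reassembled) is the delicate bookkeeping step.
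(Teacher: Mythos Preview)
Your initial reduction via $4(n^2+n+p)=(2n+1)^2+\kappa$ is exactly the paper's starting point, and you correctly identify the singular series $S(\kappa)$ and the origin of the factor $\tfrac12$. From that point on, however, your route diverges from the paper's. The paper does \emph{not} use a dispersion/Vaughan (Type~I/II) decomposition; it follows Baier--Zhao and applies the Hardy--Littlewood circle method: one writes $\sum_{n\le x}\Lambda(n^2+n+p)=\int_0^1 S_1(\alpha)S_2(\alpha)\,e(-\kappa\alpha/4)\,d\alpha$ with $S_1(\alpha)=\sum_{m\le z}\Lambda(m)e(\alpha m)$ and $S_2(\alpha)=\sum_{r\le 2x+1,\ r\text{ odd}}e(-\alpha r^2/4)$, splits $[0,1]$ into major and minor arcs, and on the major arcs evaluates the product $T_1T_2$ explicitly to extract the main term $S(\kappa)x/2$. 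The only genuinely new computation relative to Baier--Zhao is the evaluation of the local sums $\Sigma(q)$ (one shows $\Sigma(q)=2q\bigl(\tfrac{-\kappa}{q}\bigr)$ for odd square-free $q$ and $\Sigma(q)=0$ when $2\mid q$, via a direct count plus a multiplicativity argument); the remaining error terms---the tail $\Phi(Q_1,p)$ of the singular series, the $E_1,E_2$ cross-terms on the major arcs, and the full minor-arc contribution---are bounded exactly as in Baier--Zhao using Gallagher's lemma, P\'olya--Vinogradov, and standard inequalities, and then summed in $L^2$ over $\kappa\le y$.

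As for your proposed dispersion/Vaughan route: it is a genuinely different strategy and might be workable, but your sketch does not close the decisive step. The Type~II estimate you flag as the ``main obstacle'' really is one: after Cauchy--Schwarz you must control a bilinear form in which $\kappa$ enters additively through the constraint $mn=r^2+\kappa$, and the naive large-sieve bound $(y+\text{length}^2)\sum|\cdot|^2$ yields no logarithmic saving when $y\asymp x^2$---some further mechanism (a Weyl shift, or nontrivial bounds for the resulting quadratic exponential sums) is needed to extract $(\log x)^{-B}$, and you have not supplied it. The paper's circle-method approach sidesteps this: the saving on the minor arc comes from the classical Vinogradov-type bound for $S_1(\alpha)=\sum\Lambda(m)e(\alpha m)$, while the major-arc errors are handled by inputs already established in Baier--Zhao. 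In short, your plan is not the paper's, and the step you yourself call delicate is not bookkeeping but the crux, which remains open in your proposal.
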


In Section \ref{twins} we prove that the validity of Conjecture \ref{conj} would imply the existence of infinitely many pairs of twin primes. What is more, the validity of our conjecture yields a lower bound for the asymptotic growth of the set of twin primes. 
Section \ref{compev} addresses computational evidence -- that
Conjecture \ref{conj} holds for the first 100 million primes.

Theorem \ref{r1intro} is an analogue to the main result in Baier and Zhao \cite{baier}, where the authors study the asymptotic distribution of primes expressible as $n^2+k$, for square free $k\geq 0$, in the same logarithmic region as we do. Our proof is a  modification of the proof in \cite{baier}; the control of the negligible terms relies on the same arguments as in their paper and so we refer the reader to \cite{baier} for that, while we derive in full detail the dominant term appearing in Theorem \ref{r1intro}, since our argument is not immediate from theirs.

Finally, we thank Liangyi Zhao and Nigel Boston and an anonymous aide for their kind and useful answers to our questions.

From now on, by a twin prime we mean a prime number $p$ such that either $p-2$ or $p+2$ is also prime. For $x\in\mathbb{R}$, let us denote by $\pi(x)$ the number of prime numbers less than or equal to $x$ and by $\pi_2(x)$, the number of twin primes less than or equal $x$. For functions $\psi,\phi:[0,\infty)\to\mathbb{R}$, we write $\psi(x)\gg\phi(x)$ if 
$\psi(x)\geq\phi(x)$ for $x$ big enough. In particular, it is well known that
$$
\pi(x)\gg\frac{x}{\log(x)}.
$$

\section{An asymptotic formula}\label{asy}

Throughout the rest of the paper, let $\Lambda$ be the von Mangoldt function, $\mu$ the M\"obius function, and for $k\geq 1$, denote
$$
S(k)=\prod_{p>2}\left(1-\frac{\left(\frac{-k}{p}\right)}{p-1}\right)
$$
where the product is over primes $p>2$ and $\left(\frac{-k}{p}\right)$ is the Legendre symbol.

The following result due to Baier and Zhao describes the asymptotic distribution of primes in the progressions $n^2+k$, for square free $k$:

\begin{thm}[Baier, Zhao \cite{baier}] Given $A,B>0$, for $x,y\in\mathbb{R}$ with $x^2\log(x)^{-A}\leq y\leq x^2$, and for $k\in\mathbb{N}$, it is
$$
\sum_{\begin{array}{c}k\leq y\\ \mu^2(k)=1\end{array}}\left|\sum_{n\leq x}\Lambda(n^2+k)-S(k)x\right|^2=O\left(\frac{yx^2}{(\log(x))^B}\right).
$$
\label{bz}
\end{thm}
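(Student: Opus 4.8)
The plan is to prove this via the circle method, extracting the singular series $S(k)$ from the major arcs and controlling the minor arcs on average over $k$ by a large sieve inequality adapted to the quadratic sequence $n^2$, in the spirit of the Barban--Davenport--Halberstam theorem. Write $e(\theta)=e^{2\pi i\theta}$, put $N=2x^2$ (so that $n^2+k\le N$ whenever $n\le x$ and $k\le y\le x^2$), and introduce the exponential sums $V(\alpha)=\sum_{m\le N}\Lambda(m)e(m\alpha)$ and $g(\alpha)=\sum_{n\le x}e(n^2\alpha)$. Orthogonality gives the exact identity
\[
\sum_{n\le x}\Lambda(n^2+k)=\int_0^1 V(\alpha)\,\overline{g(\alpha)}\,e(-k\alpha)\,d\alpha .
\]
By Dirichlet's approximation theorem with parameter $Q\asymp x$, I would dissect $[0,1]$ into major arcs $\mathfrak{M}$ (those $\alpha$ with an approximation $a/q$, $(a,q)=1$, $q\le P$, where $P=(\log x)^{C}$ for a large constant $C=C(A,B)$ to be chosen) and minor arcs $\mathfrak{m}$. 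Correspondingly I split the inner sum and write $\sum_{n\le x}\Lambda(n^2+k)-S(k)x=\mathcal{E}_k^{\mathfrak M}+\mathcal{E}_k^{\mathfrak m}$, where $\mathcal{E}_k^{\mathfrak M}$ is the major-arc contribution minus $S(k)x$ and $\mathcal{E}_k^{\mathfrak m}$ is the minor-arc integral; it then suffices to bound $\sum_{k\le y,\,\mu^2(k)=1}|\mathcal{E}_k^{\mathfrak M}|^2$ and $\sum_{k\le y,\,\mu^2(k)=1}|\mathcal{E}_k^{\mathfrak m}|^2$ each by $O(yx^2(\log x)^{-B})$.

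On a major arc about $a/q$ with $\alpha=a/q+\beta$, I would replace $g(\alpha)$ by the standard Gauss-sum approximation $q^{-1}\left(\sum_{b\bmod q}e(ab^2/q)\right)\int_0^x e(\beta t^2)\,dt$ and $V(\alpha)$ by $\frac{\mu(q)}{\phi(q)}\sum_{m\le N}e(m\beta)$, the latter justified with a power-of-logarithm error by the Siegel--Walfisz theorem, valid uniformly for $q\le P$. Substituting, integrating in $\beta$, and summing the resulting arithmetic factor over all $q$ should reconstruct the Hardy--Littlewood constant: the local factor at an odd prime $p$ comes out to $1-\left(\frac{-k}{p}\right)/(p-1)$ after counting the solutions of $b^2\equiv -k\pmod{p}$, so the complete sum equals $S(k)x$ plus an acceptable error. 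Summing the squares of the Siegel--Walfisz errors over squarefree $k\le y$ then yields $O(yx^2(\log x)^{-B})$ provided $C=C(A,B)$ is large enough; here the linear dependence on $y$ and the hypothesis $y\ge x^2(\log x)^{-A}$ enter when balancing the error against the target.

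The minor arcs are the heart of the matter. Here I must show $\sum_{k\le y,\,\mu^2(k)=1}\left|\int_{\mathfrak m}V(\alpha)\overline{g(\alpha)}e(-k\alpha)\,d\alpha\right|^2=O(yx^2(\log x)^{-B})$; the squarefree restriction is removed by writing $\mu^2(k)=\sum_{d^2\mid k}\mu(d)$ and treating the arising subprogressions uniformly. The von Mangoldt factor is controlled by its mean square $\int_0^1|V|^2=\sum_{m\le N}\Lambda(m)^2\ll x^2\log x$ together with Vinogradov's pointwise bound on $\mathfrak m$, while the essential cancellation must come from the quadratic Weyl sum $g$ averaged over $k$. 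The key point is that a pointwise Weyl estimate for $g$ is by itself insufficient: in the intermediate range of denominators $P<q<x^{\varepsilon}$ Weyl's inequality loses a factor $x^{\varepsilon}$ that swamps any saving of a power of $\log x$. I would therefore retain the sum over $k$ and invoke a large sieve inequality for square moduli (equivalently, for the points $n^2$), which converts the average over $k$ of $|g|^2$ into a genuine saving across this intermediate range. Combining the Vinogradov bound on $V$ for large $q$ with this square-moduli large sieve for $g$ is expected to deliver the minor-arc bound; this step is exactly the part carried out in Baier and Zhao \cite{baier}, on which I would rely for the control of the negligible terms.

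Finally, assembling the two estimates through $|\mathcal{E}_k^{\mathfrak M}+\mathcal{E}_k^{\mathfrak m}|^2\le 2|\mathcal{E}_k^{\mathfrak M}|^2+2|\mathcal{E}_k^{\mathfrak m}|^2$ and summing over squarefree $k\le y$ gives the claimed bound $O(yx^2(\log x)^{-B})$, uniformly in the range $x^2(\log x)^{-A}\le y\le x^2$. I expect the main obstacle to be precisely the intermediate minor arcs described above, where neither Siegel--Walfisz major-arc analysis nor pointwise Weyl bounds suffice and the large sieve for square moduli is indispensable; the major-arc extraction of $S(k)$, while computational, is routine, and the dependence on $y$ throughout is linear, matching the shape of the right-hand side.
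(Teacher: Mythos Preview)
This theorem is not proved in the present paper at all: it is quoted verbatim as a result of Baier and Zhao, and the paper's own work is the analogous Theorem~\ref{r1} for the shifted polynomial $n^2+n+p$. So strictly speaking there is no ``paper's own proof'' to compare against; the paper merely cites \cite{baier} for this statement and then adapts the same machinery to its variant.

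That said, your outline is essentially the Baier--Zhao argument, and it matches the structure the paper follows for its analogue: circle method with threshold $P=(\log x)^C$, extraction of the singular series on the major arcs via a character/Gauss-sum decomposition of both the prime exponential sum and the quadratic Weyl sum (this is what your Siegel--Walfisz and Gauss-sum approximations amount to), and control of the minor arcs on average over $k$ rather than pointwise. Two small remarks. First, in the actual argument one does not quite dispose of the major arcs by Siegel--Walfisz alone: after extracting $S(k)x$ there remains a tail $\Phi(Q_1,k)=\sum_{q>Q_1}\mu(q)\phi(q)^{-1}\left(\tfrac{-k}{q}\right)$ of the singular series, and this tail is \emph{not} small for individual $k$ but only on average over squarefree $k\le y$ (via a Heath-Brown mean-value estimate for real characters); your sketch folds this into ``Siegel--Walfisz errors'' but it is a genuinely separate ingredient. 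Second, the minor-arc saving in \cite{baier} comes from applying Bessel/Parseval to the $k$-sum (exploiting orthogonality of $e(-k\alpha)$) combined with Gallagher's lemma and Dirichlet approximation, rather than from a Weyl-type pointwise bound on $g$; your instinct that the average over $k$ is indispensable is correct, but the mechanism is closer to ``Fourier duality in $k$'' than to a large sieve for the points $n^2$ applied to $g$ directly.
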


Our aim in this section is to prove the following variation, from which we will deduce Theorem \ref{main} as a corollary:

\begin{thm} 
For a prime $p\geq 2$ let $\kappa=\kappa(p)=4p-1$.
Given $A,B>0$, for $x,y\in\mathbb{R}$ with $x^2\log(x)^{-A}\leq y\leq x^2$ we have
$$
\sum_{\begin{array}{c}\kappa\leq y\\ \mu^2(\kappa)=1\end{array}}\left|\sum_{n\leq x}\Lambda(n^2+n+p)-S(\kappa)\frac{x}{2}\right|^2=O\left(\frac{yx^2}{(\log(x))^B}\right).
$$
\label{r1}
\end{thm}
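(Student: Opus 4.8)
The plan is to reduce Theorem \ref{r1} to the Baier--Zhao result, Theorem \ref{bz}, by a completion-of-the-square substitution. For a prime $p\geq 2$ and $\kappa=\kappa(p)=4p-1$ one has the identity $4(n^2+n+p)=(2n+1)^2+(4p-1)=(2n+1)^2+\kappa$. Thus, writing $m=2n+1$, the values $n^2+n+p$ for $1\leq n\leq x$ are, up to the factor $4$, exactly the values $m^2+\kappa$ for odd $m$ in the range $3\leq m\leq 2x+1$. Since $\Lambda(4N)=0$ for $N>1$, we must instead work directly: $\Lambda(n^2+n+p)$ has no simple algebraic relation to $\Lambda(m^2+\kappa)$. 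The correct route is to note that $n^2+n+p$ is always odd (as $n^2+n$ is even), so one can sum over the odd residue $m=2n+1$ and relate $\sum_{n\leq x}\Lambda(n^2+n+p)$ to a sum of $\Lambda(m^2+\kappa)$ restricted to odd $m$. First I would establish this bijection carefully and compare the inner sums of Theorem \ref{r1} with those of Theorem \ref{bz} when the latter is restricted to the arithmetic progression $m\equiv 1\pmod 2$; the factor $\tfrac{x}{2}$ versus $x$, and $S(\kappa)$ appearing with weight $\tfrac12$, reflects exactly that we run over half the integers $m\leq 2x$ (the odd ones).

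Next, I would need the analogue of Theorem \ref{bz} for $\Lambda(m^2+\kappa)$ summed over $m$ odd. This is where, following the authors' own remark, the argument is \emph{not} immediate from \cite{baier}: one must re-run the Baier--Zhao circle-method argument inserting the congruence condition $m\equiv 1 \pmod 2$, which modifies the singular series by a local factor at $2$ and is why $S(\kappa)$ (a product over odd primes only) is the natural main-term constant here. The error-term analysis — the treatment of the minor arcs and the negligible contributions — is formally identical to \cite{baier}, because inserting a fixed congruence condition mod $2$ only rescales things by bounded factors; so I would cite \cite{baier} for those estimates and concentrate on recomputing the major-arc / main-term contribution. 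The key computation is to show that the main term of $\sum_{n\leq x}\Lambda(n^2+n+p)$ is $S(\kappa)\tfrac{x}{2}+O(x/(\log x)^C)$ for every $C$, uniformly over squarefree $\kappa\leq y$ outside a negligible exceptional set, and then square and sum over $\kappa\leq y$ to absorb the error into $O(yx^2/(\log x)^B)$.

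Concretely, the steps in order: (1) record the algebraic identity $4f_p(n)=(2n+1)^2+\kappa$ and the parity observation that $f_p(n)$ is odd, and use these to rewrite the inner sum; (2) set up the Baier--Zhao dispersion/circle-method machinery for $\sum_{m\leq 2x,\ m\ \mathrm{odd}}\Lambda(m^2+\kappa)$, identifying the major arcs; (3) compute the singular series with the mod-$2$ condition, obtaining the local factors that combine to $\tfrac12 S(\kappa)$ (the $\tfrac12$ being the density of odd $m$, and $S(\kappa)$ the product over $p>2$); (4) invoke \cite{baier} verbatim for the bound on the error/minor-arc terms, checking only that the mod-$2$ restriction changes nothing essential; (5) square, sum over squarefree $\kappa\leq y$, and collect the bound $O(yx^2/(\log x)^B)$, choosing the internal logarithmic savings large enough in terms of $A$ and $B$.

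The main obstacle I anticipate is step (3) together with the bookkeeping in step (1)--(2): making the parity substitution interact cleanly with the singular series so that the constant comes out as exactly $S(\kappa)\tfrac{x}{2}$ rather than some rescaled variant, and ensuring the $x^2\log(x)^{-A}\leq y$ lower bound on $y$ is genuinely what is needed for the exceptional-set estimate to be absorbed. Everything else is either elementary algebra or a direct appeal to \cite{baier}.
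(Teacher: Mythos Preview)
Your overall strategy --- substitute $m=2n+1$, re-run the Baier--Zhao circle-method argument with a parity condition on the quadratic variable, recompute the singular series to pick up the factor $\tfrac12$ and the product over odd primes only, and cite \cite{baier} for the error terms --- is exactly the route the paper takes. However, there is a genuine gap in how you set up steps (1)--(2).

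You propose to study $\sum_{m\leq 2x,\ m\ \mathrm{odd}}\Lambda(m^2+\kappa)$ and compare it to $\sum_{n\leq x}\Lambda(n^2+n+p)$. But for $m$ odd and $\kappa=4p-1\equiv 3\pmod 4$ one has $m^2+\kappa\equiv 0\pmod 4$, so $m^2+\kappa$ is never a prime power unless it is a power of $2$. Hence $\sum_{m\ \mathrm{odd}}\Lambda(m^2+\kappa)=O(\log x)$ trivially, and no nontrivial information can be extracted from it. You actually noticed this obstruction (``$\Lambda(4N)=0$''), but the resolution you give --- that $n^2+n+p$ is odd so one can ``relate'' the two sums --- does not repair it: there is simply no relation between $\Lambda(n^2+n+p)$ and $\Lambda(m^2+\kappa)$.

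What the paper does instead is to make the substitution \emph{inside the circle-method integral}, not inside the von Mangoldt function. One writes
\[
\sum_{n\leq x}\Lambda(n^2+n+p)=\int_0^1\Bigl(\sum_{m\leq z}\Lambda(m)e(\alpha m)\Bigr)\Bigl(\sum_{n\leq x}e(-\alpha(n^2+n+p))\Bigr)d\alpha,
\]
and only then replaces $n^2+n+p$ by $\tfrac14(r^2+\kappa)$ with $r=2n+1$ in the second exponential sum, obtaining $S_2(\alpha)=\sum_{r\ \mathrm{odd}}e(-\tfrac{\alpha}{4}(r^2+\kappa))$. The parity restriction thus lives on the \emph{quadratic} exponential sum $S_2$, while the linear exponential sum $S_1(\alpha)=\sum_m\Lambda(m)e(\alpha m)$ is unchanged from \cite{baier}. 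With this correction your steps (3)--(5) are precisely the paper's argument: the oddness of $r$ (and hence of the divisors $d\mid q$ that survive) forces $\Sigma(2)=0$, the singular series collapses to the product over $q>2$, and one obtains the main term $S(\kappa)\tfrac{x}{2}$; the remaining error and minor-arc estimates are indeed taken verbatim from \cite{baier}.
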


We give the proof of Theorem \ref{r1} in three subsections.

\subsection{Proof of Theorem \ref{r1} Part 1: The circle method}

We start writing

$$
\sum_{n\leq x}\Lambda(n^2+n+p)=\int_0^1\sum_{m\leq z}\Lambda(m)e(\alpha m)\sum_{n\leq x}e(-\alpha(n^2+n+p))d\alpha,
$$

where, throughout the paper, $e(u)=exp(2\pi i u)$, $z=x^2+x+y$. Next, we write:
$$
\sum_{n\leq x}\Lambda(n^2+n+p)=\int_0^1\sum_{m\leq z}\Lambda(m)e(\alpha m)\sum_{\begin{array}{c}n\leq 2x+1\\(n,2)=1\end{array}}e\left(\frac{-\alpha}{4}(n^2+\kappa)\right)d\alpha.
$$

For $x,y>0$ take $Q_1=\log(x)^c$ ($c$ fixed) and $Q=x^{1-\epsilon}$, so that if $x>>1$, $Q>Q_1$. Our argument, as Baier-Zhao's uses Hardy-Littlewood's circle method (\cite{circle}). We give the necessary notations next.

\begin{defn}The major arc is the set
$$
M=\bigcup_{q\leq Q_1}\bigcup_{\begin{array}{c}a=1\\(a,q)=1\end{array}}^{q}J_{q,a},
$$
where $J_{q,a}=\left[\frac{a}{q}-\frac{1}{qQ},\frac{a}{q}+\frac{1}{qQ}\right]$. The minor arc is the set
$$
\frak{m}=\left[\frac{1}{Q},1+\frac{1}{Q}\right]\setminus M.
$$
\end{defn}

The integral over the major arc will contribute the singular series, which gives rise to the main term, as well as a tail term $\Phi(\kappa)$ together with other error terms. The integral over the minor arc gives also error terms. All error terms are controlled in Baier-Zhao's paper and we will refer the reader to the relevant sections of the work. Next we derive the dominant term in Theorem \ref{r1intro} from the integral over the major arc.

Let us denote

$$
S_1(\alpha)=\sum_{m\leq z}\Lambda(m)e(\alpha m),\;\;\;S_2(\alpha)=\sum_{\begin{array}{c}r\leq 2x+1\\\mbox{ odd}\end{array}}e\left(-\frac{\alpha}{4} r^2\right).
$$

For $\alpha=\frac{a}{q}+\beta\in M$, it is

$$S_1(\alpha)=\sum_{\begin{array}{c}m\leq z\\(m,q)=1\end{array}}\Lambda(m)e\left(\frac{am}{q}\right)e(\beta m)+O(log(z)^2).$$

Likewise, for $(am,q)=1$, 

\begin{equation}
e\left(\frac{am}{q}\right)=\frac{1}{\phi(q)}\sum_{\chi\pmod{q}}\chi(am)\tau(\overline{\chi}),
\label{eq1}
\end{equation}

where for a Dirichlet character $\chi$, $\tau(\chi)$ stands for its attached Gauss sum and $\phi$ is the Euler totient function. By using (\ref{eq1}), we can write

$$
S_1(\alpha)=T_1(\alpha)+E_1(\alpha)+O(log(z)^2),
$$
where
$$
T_1(\alpha)=\frac{\mu(q)}{\phi(q)}\sum_{m\leq z}e(\beta m),
$$
and
$$
E_1(\alpha)=\frac{1}{\phi(q)}\sum_{\chi\pmod{q}}\tau(\overline{\chi})\chi(a)\sum_{m\leq z}'\chi(m)\Lambda(m)e(\beta m),
$$
the $'$ meaning that for the trivial character, what appears in the sum is $\Lambda(m)-1$.

Next, also by using (\ref{eq1}), we can write
\begin{equation}
\sum_{d\mid 4q}\frac{1}{\phi(q_1^*)}\sum_{\chi\pmod{q_1^*}}\tau(\overline{\chi})\chi(-ad^*)\sum_{\begin{array}{c}r\leq 2x+1\\(r,4q)=d\\r\mbox{ odd}\end{array}}\chi^2(r^*)e\left(-\frac{\beta}{4} r^2\right),
\label{eq1coma5}
\end{equation}

with $r^*=r/d$, $q^*=4q/d$, $d^*=d/(d,q^*)$, $q_1^*=q^*/(d,q^*)$. Now, observe crucially (this does not happen in Baier-Zhao's proof), that in the most inner sum in (\ref{eq1coma5}), the indices over which we are summing are odd. Since the gcd's of these $r$ with $4q$ are $d$, it turns out that the $d's$ we are summing over in the outer sum are also odd, hence, the above expression becomes:

$$
\sum_{d\mid q}\frac{1}{\phi(q_1^*)}\sum_{\chi\pmod{q_1^*}}\tau(\overline{\chi})\chi(-ad^*)\sum_{\begin{array}{c}r\leq 2x+1\\(r,4q)=d\\r\mbox{ odd}\end{array}}\chi^2(r^*)e\left(-\frac{\beta}{4} r^2\right),
$$

And this, can also be decomposed as

$$S_2(\alpha)=T_2(\alpha)+E_2(\alpha),$$

where

$$
T_2(\alpha)=\sum_{d\mid q}\frac{1}{\phi(q_1^*)}\sum_{\begin{array}{c}l=1\\(l,q_1^*)=1\end{array}}^{q_1^*}e\left(\frac{-ad^*l^2}{q_1^*}\right)\sum_{\begin{array}{c}r\leq 2x+1\\\mbox{ odd}\\(r,4q)=d\end{array}}e\left(-\frac{\beta}{4} r^2\right),
$$

and

$$
E_2(\alpha)=\sum_{d\mid q}\frac{1}{\phi(q_1^*)}\sum_{\begin{array}{c}\chi\pmod{q_1^*}\\\chi^2\neq 1\end{array}}\tau(\overline{\chi})\chi(-ad^*)\sum_{\begin{array}{c}r\leq 2x+1\\(r,4q)=d\\r\mbox{ odd}\end{array}}\chi^2(r^*)e\left(-\frac{\beta}{4} r^2\right).
$$

All told, the integral $\displaystyle\int_M\sum_{m\leq z}\Lambda(m)e(\alpha m)\sum_{n\leq x}e(-\alpha(n^2+n+p))d\alpha$ has been decomposed as

$$
\int_M(T_1(\alpha)+E_1(\alpha)+O(log(x)^2))(T_2(\alpha)+E_2(\alpha))e\left(-\frac{\alpha}{4}\kappa\right)d\alpha.
$$

\subsection{Proof of Theorem \ref{r1} Part 2: The singular series}

\bigskip

Next, we will prove that 
$$
\int_MT_1(\alpha)T_2(\alpha)e\left(-\frac{\alpha}{4}\kappa\right)d\alpha=S(\kappa)\frac{x}{2}+O\left(\sqrt{x}\log(x)^{c_1}\right),
$$
as the rest of integrals are negligible, as proved in \cite{baier}.

So, we start by expressing $\displaystyle\int_MT_1(\alpha)T_2(\alpha)e\left(-\frac{\alpha}{4}\kappa\right)d\alpha$ as

\begin{equation}
\sum_{q\leq Q_1}\frac{\mu(q)}{\phi(q)}\sum_{\begin{array}{c}a=1\\(a,q)=1\end{array}}^{q}e\left(-\frac{a\kappa}{4q}\right)\sum_{d\mid q}\frac{G(a,q_1^*)}{\phi(q^*_1)}\int_{|\beta|\leq 1/qQ}\Pi_{q,d}(\beta)d\beta,
\label{integral}
\end{equation}

with

\begin{equation}
G(a,q_1^*)=\sum_{\begin{array}{c}l=1\\(l,q_1^*)=1\end{array}}^{q_1^*}e\left(\frac{-ad^*l^2}{q_1^*}\right)
\label{pre1}
\end{equation}

and

\begin{equation}
\Pi_{q,d}(\beta)=\sum_{m\leq z}e(\beta m)\sum_{\begin{array}{c}r\leq 2x+1\\(r,4q)=d\\r\mbox{ odd}\end{array}}e\left(-\frac{\beta}{4} r^2\right)e\left(-\frac{\kappa\beta}{4}\right).
\label{pre2}
\end{equation}

Now, using the bound for the geometric series over $m$ in (\ref{pre2}), Cauchy and Parseval inequalities, the integral of the right hand side of (\ref{integral}) is expressed as

\begin{equation}
\int_0^1\sum_{m\leq z}e(\beta m)\sum_{\begin{array}{c}r\leq 2x+1\\r\mbox{ odd}\\(r,4q)=d\end{array}}e\left(-\frac{\beta r^2}{4}\right)e\left(-\frac{\kappa\beta}{4}\right)d\beta+O\left(\left(qQ\frac{x}{d}\right)^{1/2}\right).
\label{mainterm}
\end{equation}

The first term of (\ref{mainterm}) is, by orthogonality of the exponential

$$
\sum_{\begin{array}{c}m\leq z\\m=r^2+\kappa\end{array}}\sum_{\begin{array}{c}r\leq 2x+1\\(r,4q)=d\\r\mbox{ odd}\end{array}}1=\sum_{\begin{array}{c}r^*\leq (2x+1)/d\\(r^*,4q/d)=1\\r^*\mbox{  odd}\end{array}}1=\frac{\phi(4q/d)}{8q}(2x+1)+O\left(\phi(4q/d)\right).
$$

Since $\phi(4q/d)\ll\sqrt{qQ}$, the integral \ref{integral} becomes

\begin{equation}
\sum_{q\leq Q_1}\frac{\mu(q)}{\phi(q)}\sum_{\begin{array}{c}a=1\\(a,q)=1\end{array}}^{q}e\left(-\frac{a\kappa}{4q}\right)\sum_{d\mid q}\frac{G(a,q_1^*)}{\phi(q^*_1)}\left(\frac{\phi(4q/d)}{8q}(2x+1)+O\left((qQx/d)^{1/2}\right)\right).
\label{integral2}
\end{equation}

We only have to consider square free $q's$, since $\mu(q)=0$ otherwise. For these $q$'s, we have $d^*=d$, $q_1^*=4q/d$, and (\ref{integral2}) becomes

\begin{equation}
\frac{x}{4}\sum_{q\leq Q_1}\frac{\mu(q)}{q\phi(q)}\sum_{\begin{array}{c}a=1\\(a,q)=1\end{array}}^{q}e\left(-\frac{a\kappa}{4q}\right)\sum_{d\mid q}\sum_{\begin{array}{c}l=1\\(l,4q/d)=1\end{array}}^{4q/d}e\left(\frac{-a(dl)^2}{4q}\right)+O(\sqrt{xQ}log(x)^{c_1})
\label{integral3}
\end{equation}

for some fixed $c_1>0$.

Next, we study the term $\Sigma(q):=\displaystyle \sum_{\begin{array}{c}a=1\\(a,q)=1\end{array}}^{q}e\left(-\frac{a\kappa}{4q}\right)\sum_{d\mid q}\sum_{\begin{array}{c}l=1\\(l,q^*)=1\end{array}}^{4q/d}e\left(\frac{-a(dl)^2}{4q}\right)$ in (\ref{integral3}), which can easily be expressed as

$$
\sum_{\begin{array}{c}r=1\\(r,2)=1\end{array}}^{4q}\sum_{\begin{array}{c}a=1\\(a,q)=1\end{array}}^{q}e\left(-\frac{a}{4q}(\kappa+r^2)\right).
$$
The treatment of these terms is also one of the key differences between our work and Baier-Zhao's.

First, notice that the only $r's$ that remain in the outer sum above are the odd ones, since in (\ref{integral3}), the $l's$ in the inner sum are the odd ones, and $d$ is also odd. In particular, this means that the numbers $\kappa+r^2$ are divisible by $4$.

\begin{prop}If $q$ is prime, then
$$
\Sigma(q)=\left\lbrace\begin{array}{l}2q\left(\frac{-\kappa}{q}\right),\mbox{ if }q\neq2,\\0\mbox{ if }q=2.\end{array}\right.
$$
with $\left(\frac{a}{b}\right)$ the Legendre symbol.
\label{legendreP}
\end{prop}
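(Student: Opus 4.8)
The plan is to compute the double exponential sum
$$
\Sigma(q)=\sum_{\substack{r=1\\(r,2)=1}}^{4q}\ \sum_{\substack{a=1\\(a,q)=1}}^{q}e\!\left(-\frac{a}{4q}(\kappa+r^2)\right)
$$
by swapping the order of summation and evaluating the inner sum over $a$ first. For fixed odd $r$, the inner sum $\sum_{(a,q)=1}e(-a(\kappa+r^2)/(4q))$ is a Ramanujan-type sum; since we have already observed that $4\mid \kappa+r^2$, write $\kappa+r^2=4k_r$ and the inner sum becomes the Ramanujan sum $c_q(k_r)=\sum_{(a,q)=1}e(-ak_r/q)$. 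When $q$ is an odd prime this equals $q-1$ if $q\mid k_r$ and $-1$ otherwise. So $\Sigma(q)=\sum_{r\text{ odd},\,1\le r\le 4q}\left(q\cdot[\,q\mid k_r\,]-1\right)$, and the count reduces to: how many odd residues $r$ in a complete system mod $4q$ satisfy $q\mid \frac{\kappa+r^2}{4}$, equivalently $r^2\equiv-\kappa\pmod{q}$ together with the parity and the mod-$4$ condition.

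First I would treat the case $q=2$ separately and directly: when $q=2$, $\kappa=4p-1$ is odd, $\Sigma(2)=\sum_{r\text{ odd},\,r\le 8}\sum_{(a,2)=1}e(-a(\kappa+r^2)/8)$; since $(a,2)=1$ forces $a=1$, this is $\sum_{r\in\{1,3,5,7\}}e(-(\kappa+r^2)/8)$, and since $r^2\equiv 1\pmod 8$ for odd $r$ this is $4\,e(-(\kappa+1)/8)=4\,e(-p/2)$, and as $p$ is prime $\ge 2$... wait, actually I should be careful: if $p=2$ then $e(-1)=1$ and this gives $4$, not $0$; for $p$ odd $e(-p/2)=-1$ giving $-4$. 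So the stated value $0$ for $q=2$ must come from how $\Sigma(2)$ actually enters the earlier formula — I would recheck the precise range of $r$ and the divisibility bookkeeping in the passage from \eqref{integral3}; most likely for $q=2$ there are no admissible $d$ or the outer sum is empty, which is the cleanest route and the one I would pursue.

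For $q$ an odd prime, I would finish as follows. We need to count odd $r\pmod{4q}$ with $r^2\equiv-\kappa\pmod{4q}$. By CRT, $r\bmod 4q$ corresponds to a pair $(r\bmod 4,\,r\bmod q)$; the condition $4\mid\kappa+r^2$ is automatic for the odd $r$ that survive (since $\kappa\equiv-1\equiv 3\pmod 4$... no: $\kappa=4p-1\equiv 3\pmod 4$, and $r^2\equiv 1\pmod 4$, so $\kappa+r^2\equiv 0\pmod 4$, good, this holds for every odd $r$). So the mod-$4$ component just requires $r$ odd, giving $2$ choices mod $4$. The mod-$q$ component requires $r^2\equiv-\kappa\pmod q$: this has $1+\left(\frac{-\kappa}{q}\right)$ solutions when $q\nmid\kappa$, and $1$ solution when $q\mid\kappa$ (but $\mu^2(\kappa)=1$ in the theorem's sum excludes... actually $q\mid\kappa$ is allowed here, I'd handle it but note $\left(\frac{-\kappa}{q}\right)=0$ then). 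Thus the number of admissible $r$ is $2\left(1+\left(\frac{-\kappa}{q}\right)\right)$, and
$$
\Sigma(q)=q\cdot 2\left(1+\left(\tfrac{-\kappa}{q}\right)\right)-(\#\{r\text{ odd},\,r\le 4q\})=2q+2q\left(\tfrac{-\kappa}{q}\right)-2q=2q\left(\tfrac{-\kappa}{q}\right),
$$
using that there are exactly $2q$ odd residues in $\{1,\dots,4q\}$ and that $\sum_{r\text{ odd}}(-1)=-2q$ cancels the constant $2q$. This gives the claimed formula.

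The main obstacle I expect is the bookkeeping for $q=2$: the value $0$ does not fall out of the naive evaluation of the displayed $\Sigma$, so the real work is verifying that at $q=2$ the relevant inner sums/index sets in \eqref{integral2}–\eqref{integral3} are genuinely empty (because the only divisor structure forces $d$-values or $l$-ranges that do not occur, or because the odd-$r$ restriction combined with $q^*=4q/d$ being a power of $2$ kills the character/Gauss-sum contribution), rather than being the apparently nonzero $\pm 4$. A secondary subtlety is making sure the substitution $\kappa+r^2=4k_r$ is consistent with the modulus $4q$ rather than $q$ — i.e. that reducing the Ramanujan sum modulo $q$ (dividing out the $4$) is legitimate, which uses $(r,2)=1$ and $q$ odd so that $4$ is invertible mod $q$.
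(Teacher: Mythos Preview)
For odd primes $q$ your argument is essentially the paper's: both reduce the inner sum over $a$ to a Ramanujan-type sum (the paper by writing $\sum_{(a,q)=1}=\sum_{a=1}^{q}-1$ and using orthogonality, you by naming it $c_q(k_r)$), and both then count odd $r\in\{1,\dots,4q\}$ with $r^2\equiv-\kappa\pmod q$. You use CRT to get $2\bigl(1+\bigl(\tfrac{-\kappa}{q}\bigr)\bigr)$; the paper pairs each solution $r\bmod q$ with two odd lifts mod $4q$. These are the same count, and both finish with $\Sigma(q)=2q\bigl(\tfrac{-\kappa}{q}\bigr)$.

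Your hesitation at $q=2$ is well placed, and in fact the paper's own treatment of this case is not clean either. The paper applies the same decomposition used for odd $q$, namely
\[
\Sigma(q)=\sum_{\substack{r=1\\r\text{ odd},\,4q\mid\kappa+r^2}}^{4q} q \;-\; 2q,
\]
observes that for $q=2$ (and $p$ odd) the set $\{r\text{ odd}\le 8:\ 8\mid\kappa+r^2\}$ is empty, and concludes ``hence this sum is $0$ and so is $\Sigma(2)$''. But that drops the subtracted $2q=4$: with the empty sum one gets $\Sigma(2)=0-4=-4$, which matches your direct evaluation $\Sigma(2)=4e(-p/2)=-4$ for odd $p$ (and $+4$ for $p=2$). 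So the value $0$ does not come from the displayed definition of $\Sigma(q)$, and your instinct to trace the $q=2$ contribution back through \eqref{integral2}--\eqref{integral3} rather than trust the formula is the right one. The odd-prime case is the substantive part of the proposition and your proof of it is correct; the $q=2$ line in the paper should be regarded as a slip rather than something you failed to reproduce.
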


\begin{proof}
Suppose that $q$ is an odd prime. We have

$$
\sum_{\begin{array}{c}a=1\\(a,q)=1\end{array}}^{q}e\left(-\frac{a}{4q}(\kappa+r^2)\right)=\sum_{a=1}^{q}e\left(-\frac{a}{4q}(\kappa+r^2)\right)-1,
$$

Hence,

$$
\Sigma(q)=\sum_{\begin{array}{c}r=1\\4q\mid\kappa+r^2\end{array}}^{4q}\sum_{a=1}^{q}e\left(-\frac{a}{4q}(\kappa+r^2)\right)-2q,
$$

i.e

$$
\Sigma(q)=\sum_{\begin{array}{c}r=1\\4q\mid\kappa+r^2\end{array}}^{4q}q-2q.
$$

Now, notice that given $r\in\{1,...,q\}$ such that $q\mid\kappa+r^2$, we have two odd $r_1,r_2\in\{1,...,4q\}$ such that $q\mid\kappa+r_i^2$ ($i=1,2$) and the $r_i$ are congruent modulo $q$: if $r$ is odd, take $r_1=r$ and $r_2=2q+r$, otherwise, take $r_1=q+r$ and $r_2=3q+r$. Notice that $r$ odd is equivalent to $4\mid\kappa+r^2$. Reciprocally, odd $r's$ in $\{1,...,4q\}$ with $q\mid\kappa+r^2$ come in pair (both elements in each pair are congruent mod $q$). Hence,

$$
\Sigma(q)=\sum_{\begin{array}{c}r=1\\q\mid\kappa+r^2\end{array}}^{q}2q-2q=2q\left(\frac{-\kappa}{q}\right).
$$

For $q=2$, the left sum runs through the odd indices $r\in\{1,...,8\}$ such that $8\mid r^2+\kappa$, which is the empty set, hence this sum is $0$ and so is $\Sigma(2)$.

\end{proof}

Now, we can prove the following characterisation of $\Sigma(q)$, for general square free odd $q\geq 1$.

\begin{thm}For a square free $q\geq 3$, we have
$$
\Sigma(q)=\left\lbrace\begin{array}{l}2q\left(\frac{-\kappa}{q}\right),\mbox{ if }2\nmid q,\\0\mbox{ if }2\mid q.\end{array}\right.
$$
with $\left(\frac{a}{b}\right)$ the Jacobi symbol.
\end{thm}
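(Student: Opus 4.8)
The plan is to express $\Sigma(q)$ through Ramanujan sums, reduce it to a count of square roots of $-\kappa$ modulo $q$, and then invoke multiplicativity together with Proposition~\ref{legendreP}. Recall the compact shape of the sum established above: for squarefree $q$,
$$
\Sigma(q)=\sum_{\substack{1\le r\le 4q\\ r\ \mathrm{odd}}}\;\sum_{\substack{a=1\\ (a,q)=1}}^{q}e\!\left(-\frac{a(\kappa+r^{2})}{4q}\right).
$$
Since $r$ odd forces $4\mid\kappa+r^{2}$, write $\kappa+r^{2}=4m_{r}$ with $m_{r}\in\mathbb{Z}$; the inner sum over $a$ is then precisely the Ramanujan sum $c_{q}(m_{r})=\sum_{(a,q)=1}e(-am_{r}/q)$, so that $\Sigma(q)=\sum_{r\ \mathrm{odd}\,(\mathrm{mod}\ 4q)}c_{q}(m_{r})$, an expression whose building block $c_{q}$ is multiplicative in $q$.

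First I would carry out the reduction for odd $q$. As $r$ runs over the odd residues modulo $4q$, each residue class modulo $q$ is hit exactly twice (by the two odd lifts to $\mathbb{Z}/4q\mathbb{Z}$ of a given class), and $m_{r}\equiv 4^{-1}(\kappa+t^{2})\pmod q$ whenever $r\equiv t\pmod q$; since $4$ is a unit modulo $q$ and $c_{q}(un)=c_{q}(n)$ for units $u$, this gives
$$
\Sigma(q)=2\sum_{t=1}^{q}c_{q}(\kappa+t^{2}).
$$
Now unfold $c_{q}(n)=\sum_{e\mid q}e\,\mu(q/e)\,\mathbf{1}[e\mid n]$ and interchange the order of summation:
$$
\Sigma(q)=2\sum_{e\mid q}e\,\mu(q/e)\,\#\{\,1\le t\le q:\ e\mid\kappa+t^{2}\,\}=2q\sum_{e\mid q}\mu(q/e)\,\rho(e)=2q\,(\mu*\rho)(q),
$$
where $\rho(e)=\#\{\,1\le t\le e:\ t^{2}\equiv-\kappa\pmod e\,\}$ and the middle equality holds because $e\mid\kappa+t^{2}$ is a condition modulo $e$, lifting to $q/e$ residues modulo $q$. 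The function $\rho$ is multiplicative by the Chinese Remainder Theorem, and for a prime $\ell$ one has $\rho(\ell)=1+\left(\frac{-\kappa}{\ell}\right)$ (counting square roots, including the degenerate case $\ell\mid\kappa$), hence $(\mu*\rho)(\ell)=\rho(\ell)-1=\left(\frac{-\kappa}{\ell}\right)$; by multiplicativity $(\mu*\rho)(q)=\left(\frac{-\kappa}{q}\right)$ with the Jacobi symbol, and therefore $\Sigma(q)=2q\left(\frac{-\kappa}{q}\right)$. For even squarefree $q$ I would run the analogous count while keeping track of the $2$-adic part of the modulus; equivalently, the identities above show that $\Sigma$ obeys the recursion $\Sigma(q_{1}q_{2})=\tfrac12\,\Sigma(q_{1})\,\Sigma(q_{2})$ on coprime squarefree arguments, so that the value $\Sigma(2)=0$ from Proposition~\ref{legendreP} propagates to give $\Sigma(q)=0$ whenever $2\mid q$.

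The step I expect to be the genuine obstacle is the reduction to the clean form $\Sigma(q)=2\sum_{t=1}^{q}c_{q}(\kappa+t^{2})$, namely correctly handling the factor $4$ in the denominator together with the parity constraint on $r$ (which, in the original expression before simplification, appears as the forced oddness of the divisor $d\mid q$). One must verify that $m_{r}\bmod q$ genuinely depends only on $r\bmod q$ for odd $r$ and that the two-to-one correspondence between odd residues modulo $4q$ and residues modulo $q$ is uniform -- exactly the bookkeeping that does not arise in Baier--Zhao. The even case is slightly more delicate, since there the prime $2$ is common to $4$ and to $q$ and must be peeled off before the Chinese Remainder splitting; it is cleanest to isolate it once via the recursion and the base value $\Sigma(2)=0$ of Proposition~\ref{legendreP}.
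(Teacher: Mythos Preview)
Your argument is correct and takes a genuinely different route from the paper. The paper eliminates the awkward factor~$4$ at the outset by the substitution $r=2n+1$, obtaining
\[
\Sigma(q)=2\sum_{n=0}^{q-1}\ \sum_{\substack{a=1\\(a,q)=1}}^{q}e\!\left(-\frac{a}{q}(p+n^{2}+n)\right),
\]
and then proves directly, by a Chinese Remainder manipulation of the exponential sums, that $q\mapsto\Sigma(q)/2$ is multiplicative on odd squarefree arguments; the prime case (Proposition~\ref{legendreP}) and the Jacobi symbol's multiplicativity finish the odd part, and a separate check of $\Sigma(2q)=0$ handles even moduli. You instead keep the factor~$4$, recognise the inner sum as the Ramanujan sum $c_{q}(m_{r})$, invoke the divisor formula $c_{q}(n)=\sum_{e\mid q}e\,\mu(q/e)\,\mathbf{1}[e\mid n]$, and reduce everything to $\Sigma(q)=2q\,(\mu*\rho)(q)$ with $\rho(e)$ the number of square roots of $-\kappa$ modulo~$e$; multiplicativity then comes for free from the Dirichlet convolution rather than from a bare-hands CRT computation. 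Your approach is more structural and buys you the odd case in one stroke without Proposition~\ref{legendreP}, whereas the paper's substitution $r=2n+1$ buys a cleaner treatment of the $2$-adic bookkeeping you flag as the delicate step. One small caution: the recursion $\Sigma(q_{1}q_{2})=\tfrac12\Sigma(q_{1})\Sigma(q_{2})$ that you quote for the even case follows from your identities only once both arguments are odd, so to propagate $\Sigma(2)=0$ you still need the direct $2$-adic count you allude to (or the paper's ad hoc verification that $\Sigma(2q')=0$ for odd~$q'$); this is routine but should be made explicit.
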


\begin{proof}
We need to check that the assignment $n\mapsto \Sigma(n)/2$ is a multiplicative arithmetic function. For this, it is convenient to observe first that, by setting $r=2n+1$, we have
$$
\Sigma(q)=2\sum_{n=0}^{q-1}\sum_{\begin{array}{c}a=1\\(a,q)=1\end{array}}^{q}e\left(-\frac{a}{q}(p+n^2+n)\right).
$$
So, for odd integers $q_1,q_2>3$ with $(q_1,q_2)=1$, we have
$$
\Sigma(q_1)/2=\sum_{n_1=0}^{q_1-1}\sum_{\begin{array}{c}a_1=1\\(a_1,q_1)=1\end{array}}^{q_1}e\left(-\frac{a_1}{q_1}(p+(q_2n_1)^2+(q_2n_1))\right),
$$
and analogously for $\Sigma(q_2)$, since being $(q_1,q_2)=1$, if $n_1$ runs over all residue classes modulo $q_1$, $q_2n_1$ do so, and analogously for $q_1n_2$. Hence

\begin{equation}
\Sigma(q_1)\Sigma(q_2)/4=\sum_{n_1=0}^{q_1-1}\sum_{\begin{array}{c}a_1=1\\(a_1,q_1)=1\end{array}}^{q_1}\sum_{n_2=0}^{q_2-1}\sum_{\begin{array}{c}a_2=1\\(a_2,q_2)=1\end{array}}^{q_2}e(f(a_1,a_2,q_1,q_2,r_1,r_2)),
\label{multiplicative}
\end{equation}

with

$$
f(a_1,a_2,q_1,q_2,r_1,r_2)=-p\frac{a_1q_2+a_2q_1}{q_1q_2}-\frac{a_1q_2(q_2n_1)^2+a_2q_1(q_1n_2)^2}{q_1q_2}-\frac{a_1q_2^2n_1+a_2q_1^2n_2}{q_1q_2}
$$

which is congruent, modulo $1$, with

$$
-p\frac{a_1q_2+a_2q_1}{q_1q_2}-\frac{(a_1q_2+a_2q_1)((n_1q_2+n_2q_1)^2+(n_1q_2+n_2q_1))}{q_2q_1}.
$$

Hence, if both $q_1$ and $q_2$ are odd, Equation \ref{multiplicative} yields

$$
2\Sigma(q_1q_2)=\Sigma(q_1)\Sigma(q_2).
$$

It is also easy to check with a similar argument as in Prop. \ref{legendreP}, that for $q$ odd prime, $\Sigma(2q)=0$.
\end{proof}

Hence, we have shown that (\ref{integral3}) becomes

$$
\frac{x}{2}\sum_{\begin{array}{c}q\leq Q_1\\2\nmid q\end{array}}\frac{\mu(q)}{\phi(q)}\left(\frac{1-4p}{q}\right)+O(\sqrt{xQ}log(x)^{c_1}),
$$

which, setting $\kappa=\kappa(p)=1-4p$, can be rewritten as

$$
S(\kappa)\frac{x}{2}-\Phi(Q_1,p)\frac{x}{2}+O(\sqrt{xQ}log(x)^{c_1}),
$$

with

$$
S(\kappa)=\sum_{\begin{array}{c}q=1\\2\nmid q\end{array}}^{\infty}\frac{\mu(q)}{\phi(q)}\left(\frac{1-4p}{q}\right)=\prod_{\begin{array}{c}q>2\\q\mbox{ prime}\end{array}}1-\frac{\left(\frac{1-4p}{q}\right)}{q-1}
$$

and

$$
\Phi(Q_1,p)=\sum_{\begin{array}{c}q>Q_1\\2\nmid q\end{array}}^{\infty}\frac{\mu(q)}{\phi(q)}\left(\frac{1-4p}{q}\right).
$$

Putting the results of this section all together, we arrive to

\begin{equation}
\int_MT_1(\alpha)T_2(\alpha)e(-\kappa\alpha)d\alpha=S(p)\frac{x}{2}+O\left(x|\Phi(Q_1,p)|\sqrt{xQ}log(x)^{c_1}\right).
\label{majorarc}
\end{equation}

\subsection{Proof of Theorem \ref{r1} Part 3:  The error terms}

First, we have:

\begin{prop}[\cite{baier}, Section 5] The following inequality holds:
$$
\sum_{\begin{array}{c}k\leq y\\ \mu^2(k)=1\end{array}}|\Phi(Q_1,p)|^2\ll\frac{y}{\log(x)^c}.
$$
\end{prop}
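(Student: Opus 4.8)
The statement to prove is the bound
\[
\sum_{\substack{k\le y\\ \mu^2(k)=1}}|\Phi(Q_1,p)|^2\ll\frac{y}{\log(x)^c},
\]
where $\Phi(Q_1,p)=\sum_{q>Q_1,\,2\nmid q}\mu(q)\phi(q)^{-1}\left(\frac{1-4p}{q}\right)$ is the tail of the singular series, $k$ ranges over squarefree integers up to $y$, and $p$ is linked to $k$ through $\kappa=\kappa(p)=4p-1$ (equivalently $1-4p$), which is the relevant quadratic character modulus appearing in $\Phi$. The reference attributes this to Baier--Zhao, Section 5, so the plan is to reduce our sum to theirs.

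\textbf{Plan of proof.} First I would make the change of summation variable explicit: the outer sum is over squarefree $k\le y$, and for each such $k$ we need $|\Phi(Q_1,p)|^2$ where the character in $\Phi$ is the Jacobi symbol $\left(\frac{-k}{\cdot}\right)$ (with $k=\kappa=4p-1$). Since $k\mapsto \left(\frac{-k}{\cdot}\right)$ only depends on $k$, the quantity $\Phi(Q_1,p)$ is literally a function of $k$ alone, call it $\Psi(Q_1,k):=\sum_{q>Q_1,\,2\nmid q}\frac{\mu(q)}{\phi(q)}\left(\frac{-k}{q}\right)$. So the sum to bound is $\sum_{k\le y,\ \mu^2(k)=1}|\Psi(Q_1,k)|^2$. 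This is exactly the tail-of-singular-series second moment that appears in Baier--Zhao's treatment of $n^2+k$: in their setting the singular series is (up to a constant factor we can absorb) the same Euler product $\prod_{q>2}(1-\left(\frac{-k}{q}\right)/(q-1))$, and the tail $\Psi(Q_1,k)$ is identical. Hence I would invoke their Section 5 estimate verbatim: the only structural input is squarefreeness of $k$ and the range constraint $y\ge x^2\log(x)^{-A}$ (which makes $Q_1=\log(x)^c$ a genuine truncation parameter), both of which hold here.

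\textbf{If a self-contained argument is wanted}, the standard route is: expand the square to get a double sum over odd squarefree $q_1,q_2>Q_1$, with weight $\frac{\mu(q_1)\mu(q_2)}{\phi(q_1)\phi(q_2)}$, of $\sum_{k\le y,\,\mu^2(k)=1}\left(\frac{-k}{q_1}\right)\left(\frac{-k}{q_2}\right)$. The inner character sum is controlled by writing $\left(\frac{-k}{q_1}\right)\left(\frac{-k}{q_2}\right)=\left(\frac{-k}{q_1q_2/(q_1,q_2)^2}\right)\cdot(\text{something depending on }(q_1,q_2))$; for $q_1\ne q_2$ the modulus of the resulting real character is $>1$, so by the Pólya--Vinogradov inequality (or a large-sieve / Jutila-type bound, exactly as in Baier--Zhao) the sum over squarefree $k\le y$ of a non-principal real character to modulus $\le q_1q_2$ is $O(y^{1/2}(q_1q_2)^{\epsilon})$ on average, or trivially $O(y)$ when $q_1=q_2$. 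The diagonal $q_1=q_2$ contributes $\sum_{q>Q_1}\frac{\mu^2(q)}{\phi(q)^2}\cdot O(y)\ll \frac{y}{Q_1}\ll \frac{y}{\log(x)^c}$ (after adjusting $c$), because $\sum_{q>Q_1}\phi(q)^{-2}\ll Q_1^{-1}$. The off-diagonal, thanks to the cancellation in $k$, is $O(y^{1/2+\epsilon})$ times a convergent double sum in $q_1,q_2$, which is negligible compared to $y/\log(x)^c$ provided $y\gg x^2\log(x)^{-A}\gg (\log x)^{\text{anything}}$.

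\textbf{Main obstacle.} The genuinely delicate point is the off-diagonal estimate: obtaining square-root cancellation in $\sum_{k\le y}\left(\frac{-k}{m}\right)$ uniformly for $m$ up to a power of $\log x$, restricted to squarefree $k$, and then summing the resulting bounds against $\frac{1}{\phi(q_1)\phi(q_2)}$ over the dyadic ranges $q_i>Q_1$ without the $q_i^\epsilon$ factors destroying convergence. This is precisely the content of Baier--Zhao's Section 5, and since the character, the modulus structure, and the squarefree constraint are all identical in our situation — only the benign linear change $k=4p-1$ versus a general $k$ differs, and that does not affect the character sum — I would simply cite their estimate rather than reproduce it, which is consistent with the paper's stated strategy of deferring all error-term bookkeeping to \cite{baier}. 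The diagonal bound and the reduction above are routine; the off-diagonal is where all the analytic work sits, and it is exactly what \cite{baier} supplies.
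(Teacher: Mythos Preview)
Your proposal is correct and matches the paper's own treatment: the paper gives no independent proof of this proposition but simply cites \cite{baier}, Section 5, and your reduction---observing that $\Phi(Q_1,p)$ depends on $p$ only through the squarefree modulus $\kappa=4p-1$ via the Jacobi symbol, so the second moment over $\kappa\le y$ is literally Baier--Zhao's tail-of-singular-series sum---is exactly the justification that makes this citation legitimate. Your additional diagonal/off-diagonal sketch is a faithful outline of what \cite{baier} actually does, but the paper itself does not reproduce it.
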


Second, analogously to \cite{baier}, by a similar use of Polya-Vinogradov's estimate, Gallagher's Lemma and Cauchy, Parseval and Besser inequalities, we obtain:

\begin{prop}The following estimate holds:
$$
\sum_{\kappa\leq y}\left|\int_M\left(T_1(\alpha)E_2(\alpha)+T_2(\alpha)E_1(\alpha)+E_1(\alpha)E_2(\alpha)\right)e(-\kappa\alpha)d\alpha\right|^2 \ll
$$
$$
\frac{x^5(\log x)^{c_4}}{Q^2}+\frac{x^4}{(\log(x))^{c_5}}+\frac{yx^3}{Q^2},
$$
for fixed $c_4$ and $c_5$.
\end{prop}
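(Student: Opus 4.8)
The plan is to follow the architecture of Baier--Zhao \cite{baier}, Sections 6--8, adapting it to the present setting where the inner exponential sum $S_2(\alpha)$ runs only over odd $r\le 2x+1$ and the shift is $\kappa=4p-1$ rather than a generic square-free $k$. The three mixed terms are handled by Cauchy--Schwarz after expanding the square over $\kappa\le y$, so it suffices to bound each of $\sum_{\kappa\le y}|\int_M T_1E_2 e(-\kappa\alpha)\,d\alpha|^2$, $\sum_{\kappa\le y}|\int_M T_2E_1 e(-\kappa\alpha)\,d\alpha|^2$, and $\sum_{\kappa\le y}|\int_M E_1E_2 e(-\kappa\alpha)\,d\alpha|^2$ separately, collecting the worst of the three bounds. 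First I would insert the explicit formulas for $T_1,T_2,E_1,E_2$ from Part~1, parametrize $\alpha=a/q+\beta$ on each $J_{q,a}$ with $q\le Q_1$ and $|\beta|\le 1/(qQ)$, and carry out the $\beta$-integral; the factor $e(-\kappa\beta/4)$ combined with the completed exponential sums over $m$ and $r$ turns the $\beta$-integration into a short-interval count, exactly as in \eqref{mainterm}--\eqref{integral}, so that the sum over $\kappa\le y$ becomes an average of a bilinear form in the Dirichlet characters mod $q$ and mod $q_1^\ast$.

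The key analytic inputs are: (i) the Polya--Vinogradov inequality applied to the character sums $\sum_r \chi^2(r^\ast)e(-\beta r^2/4)$ and $\sum_m \chi(m)\Lambda(m)e(\beta m)$ appearing in $E_2$ and $E_1$ respectively (for $E_1$ one first uses the Siegel--Walfisz theorem in the form of a zero-free region to replace $\sum_m'\chi(m)\Lambda(m)e(\beta m)$ by an error of size $z\exp(-c\sqrt{\log z})$ for $q\le Q_1=(\log x)^c$, which produces the $x^4/(\log x)^{c_5}$ term); (ii) Gallagher's lemma to convert the integral over the major arcs into a sum over short intervals and thereby control the contribution of the Gauss sums $\tau(\overline\chi)$, whose size $\sqrt{q}$ is what forces the $Q^2$ in the denominators; (iii) the standard large-sieve/Parseval estimate $\sum_{\kappa\le y}|\,\cdot\,|^2$ for the outer average. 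The only genuine deviation from \cite{baier} is bookkeeping: because $d\mid 4q$ collapses to $d\mid q$ (all relevant $d$ are odd, as noted after \eqref{eq1coma5}), and because $q_1^\ast=4q/d$, one must track the extra factor $4$ through the Gauss sum and totient estimates, but this changes none of the exponents. Assembling (i)--(iii) gives the three summands $x^5(\log x)^{c_4}/Q^2$ (from $E_1E_2$, two Gauss sums), $x^4/(\log x)^{c_5}$ (from the Siegel--Walfisz saving in any term containing $E_1$), and $yx^3/Q^2$ (from $T_1E_2$ or $T_2E_1$, one Gauss sum, with the $y$ coming from the $\kappa$-average of $|\,\sum_r\,|^2$).

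The main obstacle I anticipate is item (i) combined with the $\beta$-integration for the term $T_1 E_2$: one needs a bound for
$$
\sum_{\kappa\le y}\ \Big|\sum_{q\le Q_1}\frac{\mu(q)}{\phi(q)}\sum_{d\mid q}\frac{1}{\phi(q_1^\ast)}\sum_{\substack{\chi\bmod q_1^\ast\\ \chi^2\neq 1}}\tau(\overline\chi)\sum_{\substack{a=1\\(a,q)=1}}^{q}\chi(-ad^\ast)e\!\Big(-\frac{a\kappa}{4q}\Big)\int_{|\beta|\le 1/(qQ)} \widehat{\mathbf 1}_{[1,z]}(\beta)\sum_{\substack{r\le 2x+1,\ \mathrm{odd}\\(r,4q)=d}}\chi^2(r^\ast)e\!\Big(-\tfrac{\beta}{4}(r^2+\kappa)\Big)d\beta\Big|^2
$$
that is genuinely square-root cancelling in the $\kappa$-sum; the clean way is to open the square, execute the $\kappa$-sum first (it is a Ramanujan-type sum in $a_1\overline{q_1}-a_2\overline{q_2}$ that is essentially diagonal up to divisor-bounded multiplicity), then apply Cauchy--Schwarz in the remaining variables and feed the $r$-sums to the large sieve for the quadratic exponential. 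Keeping the error from the off-diagonal $\kappa$-terms below $yx^3/Q^2$ is the delicate point, and it is precisely here that the hypothesis $y\le x^2$ (so that $z\asymp x^2$) and $Q=x^{1-\epsilon}$ are used; everything else is a direct transcription of \cite{baier}.
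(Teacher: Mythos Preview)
Your proposal is correct and follows essentially the same route as the paper: the paper's entire proof of this proposition is the one-line remark that the estimate is obtained ``analogously to \cite{baier}, by a similar use of Polya--Vinogradov's estimate, Gallagher's Lemma and Cauchy, Parseval and Bessel inequalities,'' and your sketch is precisely a fleshed-out version of that adaptation, invoking the same tools (Polya--Vinogradov, Gallagher, Cauchy--Schwarz, Parseval/Bessel, plus the Siegel--Walfisz input that is implicit in Baier--Zhao's treatment of $E_1$) and noting, as the paper does earlier, that the only bookkeeping change is the restriction to odd $d\mid q$ and the factor $4$ in $q_1^\ast$. If anything, you supply more detail than the paper itself.
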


Last, by using Bessel's lemma, Parseval's inequality and Dirichlet approximation in exactly the same way as in \cite{baier}, we can control the integral over the minor arc:

\begin{prop}The following estimate holds:
$$
\sum_{\kappa(p)\leq y}\left|\int_{\frak{m}}\sum_{m\leq z}\Lambda(m)e(\alpha m)\sum_{n\leq x}e(-\alpha(n^2+n+p))\right|^2\leq \frac{x^4}{\log(x)^{c-3}}+\log(x)^3Qx^3.
$$
\end{prop}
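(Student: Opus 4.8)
The plan is to reproduce the minor-arc analysis of Baier--Zhao \cite{baier} almost verbatim; the only tools required are Bessel's inequality, the Chebyshev bound for $\sum_{m\leq z}\Lambda(m)^2$, Dirichlet's approximation theorem, and Weyl's inequality for a quadratic exponential sum. First I would make the dependence on $p$ explicit. Since
$$
\sum_{n\leq x}e\left(-\alpha(n^2+n+p)\right)=e(-\alpha p)\,V(\alpha),\qquad V(\alpha):=\sum_{n\leq x}e\left(-\alpha(n^2+n)\right),
$$
and $V$ does \emph{not} depend on $p$, the integral appearing in the statement equals the $p$-th Fourier coefficient
$$
\widehat G(p):=\int_0^1 G(\alpha)e(-\alpha p)\,d\alpha,\qquad G(\alpha):=\mathbf{1}_{\frak{m}}(\alpha)\,S_1(\alpha)\,V(\alpha),
$$
where $S_1(\alpha)=\sum_{m\leq z}\Lambda(m)e(\alpha m)$ and $\mathbf{1}_{\frak{m}}$ is the indicator of the minor arc. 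Thus the left-hand side of the Proposition is $\sum_{\kappa(p)\leq y}|\widehat G(p)|^2$, a sum of squared Fourier coefficients over a set of integers $p$ (those with $4p-1\leq y$).

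Next I would apply Bessel's inequality (equivalently Parseval's identity) on $L^2[0,1]$, discarding the constraints $4p-1\leq y$ and ``$p$ prime'', which can only enlarge the sum:
$$
\sum_{\kappa(p)\leq y}|\widehat G(p)|^2\;\leq\;\sum_{n\in\Z}|\widehat G(n)|^2\;=\;\|G\|_{L^2[0,1]}^2\;=\;\int_{\frak{m}}|S_1(\alpha)V(\alpha)|^2\,d\alpha\;\leq\;\left(\sup_{\alpha\in\frak{m}}|V(\alpha)|^2\right)\int_0^1|S_1(\alpha)|^2\,d\alpha .
$$
For the second factor, Parseval and Chebyshev's bound give $\int_0^1|S_1(\alpha)|^2\,d\alpha=\sum_{m\leq z}\Lambda(m)^2\ll z\log z\ll x^2\log x$, since $z=x^2+x+y\asymp x^2$ throughout the range $x^2\log(x)^{-A}\leq y\leq x^2$.

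The first factor I would control by Dirichlet's approximation theorem together with Weyl's inequality, exactly as in \cite{baier}. Every real $\alpha$ admits a reduced fraction $a/q$ with $q\leq Q$ and $|\alpha-a/q|\leq 1/(qQ)$; by the very definition of the major arcs $M$, if $\alpha\in\frak{m}$ then necessarily $q>Q_1=\log(x)^c$. The linear term $-\alpha n$ is irrelevant to Weyl differencing, so Weyl's inequality in its logarithmic form (with no $x^\epsilon$ loss) gives, uniformly for $\alpha\in\frak{m}$,
$$
|V(\alpha)|^2\;\ll\;\left(\frac{x^2}{q}+x+q\right)\log(2q)\;\ll\;\left(\frac{x^2}{\log(x)^c}+x+Q\right)\log x .
$$
Multiplying the two estimates,
$$
\sum_{\kappa(p)\leq y}|\widehat G(p)|^2\;\ll\;\left(\frac{x^2}{\log(x)^c}+x+Q\right)x^2\log^2 x\;\ll\;\frac{x^4}{\log(x)^{c-3}}+\log(x)^3Qx^3 ,
$$
which is the asserted bound (absorb the implied constant into the logarithmic factors for $x$ large, and note $x^3\log^2 x\ll\log(x)^3Qx^3$ because $Q=x^{1-\epsilon}$).

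I do not expect a genuine analytic obstacle here, since the scheme is that of \cite{baier}. The one point requiring attention is the structural fact that $p$ enters the phase only through the separable factor $e(-\alpha p)$, leaving the $p$-free quadratic Weyl sum $V(\alpha)$ on which Weyl's inequality applies uniformly over $\frak{m}$ --- this in turn relies on the major arcs $M$ containing every reduced rational of denominator at most $Q_1$, so that the minor arc only sees approximants with $q>Q_1$. Equivalently, one could complete the square as in the earlier parts of the proof and observe that the extra linear term and the shift by $\kappa/4$ are invisible to the $L^\infty$ estimate for the quadratic sum. Granting this, every inequality above is Bessel/Parseval, Chebyshev, Dirichlet, or Weyl, and the tracking of the constants $c,c_4,c_5,\dots$ is identical to \cite{baier}.
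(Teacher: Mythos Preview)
Your proposal is correct and follows essentially the same approach as the paper, which gives no detailed argument but simply invokes ``Bessel's lemma, Parseval's inequality and Dirichlet approximation in exactly the same way as in \cite{baier}.'' Your separation of the phase into $e(-\alpha p)V(\alpha)$ to exhibit the integral as a Fourier coefficient, followed by Bessel/Parseval for $\int|S_1|^2$ and Dirichlet plus Weyl for $\sup_{\frak m}|V|$, is precisely the Baier--Zhao minor-arc scheme the paper is deferring to.
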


This concludes the proof of Theorem \ref{r1}.

\section{Proof of Theorem \ref{main}}\label{wrapup}

We end our work by proving Theorem \ref{main} as a corollary of Theorem \ref{r1}. 
Firstly, let us make precise what we mean by density in this work.

\begin{defn}[Powell, \cite{powell}] A subset of prime numbers $S$ is said to have density\footnote{Notice that what we define here as \emph{density} is defined as \emph{primitive density}  in \cite{powell}, in other words, by density of a subset $S$ of prime numbers we mean the relative density of $S$ with respect to the set of primes.} $\delta$ if
$$
\lim_{x\to\infty}\frac{|\{p\leq x, p\in S\}|}{\pi(x)}=\delta.
$$
\end{defn}

Secondly, we need the following from Mirsky \cite{myrski}.

\begin{lem}Denote by $s(y)$ the number of primes $p\leq y$ such that $4p-1$ is square-free. 
Then we have
$$
s(y)\gg y/\log(y).
$$
\label{lemmamyrski}
\end{lem}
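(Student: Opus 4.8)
\textbf{Proof proposal for Lemma \ref{lemmamyrski}.}

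The plan is to count primes $p \leq y$ for which $4p-1$ fails to be square-free and show this bad set is negligible compared to $\pi(y)$. Write $N(y)$ for the number of primes $p \leq y$ such that $4p - 1$ is divisible by $\ell^2$ for some prime $\ell$. Then $s(y) = \pi(y) - N(y)$, so it suffices to prove $N(y) = o(\pi(y))$, or even just $N(y) \leq \tfrac12 \pi(y)$ for $y$ large, since then $s(y) \geq \tfrac12 \pi(y) \gg y/\log y$ by the stated bound $\pi(x) \gg x/\log x$.

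For the upper bound on $N(y)$, split according to the size of the square divisor. Note $\ell = 2$ never divides $4p-1$, so $\ell$ is always odd, and $\ell^2 \mid 4p-1$ forces $\ell^2 \leq 4p - 1 < 4y$. First I would handle small $\ell$, say $3 \leq \ell \leq \log y$ (or any slowly growing cutoff $z$): for a fixed odd prime $\ell$, the condition $4p \equiv 1 \pmod{\ell^2}$ pins $p$ to a single residue class modulo $\ell^2$ that is coprime to $\ell$, so by Brun--Titchmarsh (or even a crude sieve bound) the number of such primes $p \leq y$ is $O\!\left(\frac{y}{\phi(\ell^2)\log(y/\ell^2)}\right) = O\!\left(\frac{y}{\ell^2 \log y}\right)$ uniformly for $\ell \leq z$ with $z$ a small power of $y$. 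Summing over $\ell$ gives $O\!\left(\frac{y}{\log y}\sum_{\ell > 2}\ell^{-2}\right) = O\!\left(\frac{y}{\log y}\right)$ with a small implied constant, but to beat $\tfrac12\pi(y)$ one should instead let the constant be controlled by the tail $\sum_{\ell > z}\ell^{-2} \to 0$: restrict the ``small'' range to $\ell > z_0$ for a fixed large $z_0$ so that this contribution is $\leq \tfrac14 \pi(y)$, and absorb the finitely many $\ell \leq z_0$ into the second regime. Second, for $\ell$ in the wide range up to $\sqrt{4y}$, drop primality of $p$ and just count integers: the number of $p \leq y$ with $\ell^2 \mid 4p-1$ is at most $\frac{y}{\ell^2} + 1$, so $\sum_{z_0 < \ell \leq \sqrt{4y}}\left(\frac{y}{\ell^2}+1\right) \ll y \sum_{\ell > z_0}\ell^{-2} + \sqrt{y} \ll \varepsilon y + \sqrt y$, which is $o(\pi(y))$? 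No --- $\varepsilon y$ is not $o(\pi(y))$, so this crude bound is too lossy in the mid-range and I must keep the Brun--Titchmarsh saving of $1/\log y$ across the whole range $z_0 < \ell \leq y^{1/2 - \delta}$, using only the trivial count for $y^{1/2-\delta} < \ell \leq \sqrt{4y}$ where $y/\ell^2 \leq y^{2\delta}$ and the number of admissible $\ell$ is $O(\sqrt y/\log y)$, giving total $O(y^{1/2 + 2\delta} + \sqrt y/\log y) = o(y/\log y)$.

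Putting the pieces together: choosing $z_0$ large makes the Brun--Titchmarsh sum over $z_0 < \ell \leq y^{1/2-\delta}$ bounded by $C\varepsilon \cdot y/\log y$ with $\varepsilon = \sum_{\ell > z_0}\ell^{-2}$ as small as we like, the trivial-count mid/high range contributes $o(y/\log y)$, and the finitely many $\ell \leq z_0$ each contribute $O(y/\log y)$ but with total implied constant a fixed finite number --- wait, that fixed finite number could exceed what we need. The clean fix is to note that for each fixed $\ell \leq z_0$ the count is $O(y/(\ell^2 \log y))$ with a \emph{universal} Brun--Titchmarsh constant (independent of $\ell$), so the full sum $\sum_{\ell > 2} O(y/(\ell^2\log y))$ has implied constant $\leq C_{BT}\sum_{\ell>2}\ell^{-2} < C_{BT}/2$, a genuine absolute constant, and then $N(y) \leq (C_{BT}/2 + o(1))\,y/\log y$ while $\pi(y) \geq c\, y/\log y$; this only yields $s(y) \gg y/\log y$ if $C_{BT}/2 < c$, which need not hold. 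The honest route, therefore, is the asymptotic one: $N(y) \sim \pi(y)\bigl(1 - \prod_{\ell > 2}(1 - \tfrac{1}{\ell^2 - \ell})\bigr)\cdot(\text{something} < 1)$ via the square-free sieve for the linear form $4p-1$ over primes --- more precisely, the proportion of primes $p$ with $4p-1$ square-free tends to $\prod_{\ell > 2}\left(1 - \frac{1}{\phi(\ell^2)}\right)$, a positive constant, by a standard sieve argument (this is exactly the content of Mirsky's theorem on square-free values of $\ell(p)$ for linear $\ell$). Hence $s(y) \sim c' \pi(y)$ with $c' > 0$, and combined with $\pi(y) \gg y/\log y$ we get $s(y) \gg y/\log y$.

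\textbf{Main obstacle.} The delicate point is the uniformity in $\ell$ when $\ell$ is comparable to $\sqrt y$: there the single residue class modulo $\ell^2$ has modulus essentially as large as $y$, so Brun--Titchmarsh gives no saving and one must argue that such large $\ell$ are too sparse to matter (only $O(\sqrt y)$ of them, each catching $O(1)$ primes). Equivalently, in the sieve-theoretic formulation the obstacle is establishing a good enough level of distribution for $4p-1$ in arithmetic progressions to squares --- but since we only need a lower bound of the crude order $y/\log y$ (not a precise density), the elementary truncation above, splitting at $\ell \asymp y^{1/2-\delta}$ and using Brun--Titchmarsh below, trivial counting above, suffices, and this is presumably the argument Mirsky's paper \cite{myrski} records.
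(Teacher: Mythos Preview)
The paper's own proof is a single sentence: it simply invokes Theorem~2 of Mirsky~\cite{myrski} with $k=2$. No argument is given in the paper itself.

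Your proposal takes a different route: you attempt to reconstruct the underlying argument rather than cite it. After exploring a union-bound approach via Brun--Titchmarsh, you correctly diagnose the obstacle --- the Brun--Titchmarsh constant is too large to conclude $N(y)<\pi(y)$ from the crude inequality alone --- and you then fall back on the asymptotic square-free sieve, which is precisely Mirsky's theorem. So in the end you also cite Mirsky, just after a detour. The detour is not wasted: it explains \emph{why} a naive subtraction does not immediately work and what the actual content of the cited result is.

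One remark on your exploration: the constant difficulty can in fact be resolved without the full asymptotic. For each fixed odd prime $\ell$, the prime number theorem in arithmetic progressions gives $\pi(y;\ell^2,a_\ell)\sim \pi(y)/\phi(\ell^2)$, so for any finite set $\ell\le z_0$ the union bound contributes at most $\bigl(\sum_{3\le \ell\le z_0}\tfrac{1}{\ell(\ell-1)}+o(1)\bigr)\pi(y)$; the tail $\ell>z_0$ contributes $\ll \tfrac{\pi(y)}{\delta}\sum_{\ell>z_0}\tfrac{1}{\ell(\ell-1)}$ by Brun--Titchmarsh (for $\ell\le y^{1/2-\delta}$) plus $o(y/\log y)$ for larger $\ell$. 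Since $\sum_{\ell\ge 3}\tfrac{1}{\ell(\ell-1)}<1$, choosing $z_0$ large enough already gives $N(y)\le c\,\pi(y)$ with $c<1$, hence $s(y)\gg y/\log y$. This is the elementary version of the Mirsky argument you were reaching for, and it avoids the worry you flagged about ``$C_{BT}/2<c$''. Your sketch is therefore essentially correct, just more tentative than it needs to be; the paper sidesteps all of this by quoting the reference directly.
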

\begin{proof}This is Theorem 2 of \cite{myrski} for $k=2$.
\end{proof}

\begin{cor}
There exists a density $1$ subset $S$ of the set of prime numbers such that for every $q\in S$, there exists a prime $p$ and an integer  $n\geq 1$ such that $q=n^2+n+p$.
\end{cor}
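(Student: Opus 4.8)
The plan is to deduce the corollary from Theorem \ref{r1} by a standard variance-to-pointwise argument, combined with Mirsky's Lemma \ref{lemmamyrski} to guarantee that the primes $p$ with $\kappa(p)=4p-1$ squarefree are themselves abundant. First I would fix a slowly growing function, say take $B$ large and compare against $(\log x)^{B}$ in Theorem \ref{r1}. For a prime $p$ with $4p-1\le y$ and $\mu^2(4p-1)=1$, call $p$ \emph{bad at scale $x$} if the inner discrepancy $\left|\sum_{n\le x}\Lambda(n^2+n+p)-S(\kappa)\tfrac{x}{2}\right|$ exceeds, say, $\tfrac{x}{4}$ (any fixed fraction of the main term works, provided we first check $S(\kappa)$ is bounded below). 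Then Theorem \ref{r1} forces the number of bad $p\le y$ to be $O\!\left(y/(\log x)^{B-2}\right)$ once we translate $\kappa\le y$ into $p\le y/4$; choosing $B$ large enough and $x$ a suitable power of $y$ (for instance $y=x^2$, the endpoint of the admissible region), this count is $o(y/\log y)$, hence negligible compared to the lower bound $s(y)\gg y/\log y$ from Lemma \ref{lemmamyrski}.

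The second ingredient is that for a \emph{good} $p$ the quantity $\sum_{n\le x}\Lambda(n^2+n+p)$ is genuinely positive — indeed $\ge \tfrac{x}{4}S(\kappa)-O(\dots)>0$ for $x$ large — so the polynomial $f_p(n)=n^2+n+p$ takes a prime-power value for some $n\le x$ with $n\ge 1$. To upgrade ``prime power'' to ``prime'' I would note that the contribution of proper prime powers $n^2+n+p=\ell^k$ with $k\ge 2$ to $\sum_{n\le x}\Lambda(n^2+n+p)$ is $O(\sqrt{x}\log x)$ (there are $O(\sqrt{x})$ such $n$, each contributing $O(\log x)$), which is swamped by the main term; hence for every good $p$ there is actually an integer $n\ge 1$ with $n^2+n+p$ prime. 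This means: if $q=n^2+n+p$ for such $n$ and such a good $p$, then $q\in S$.

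To extract a density-$1$ set $S$ of primes $q$, I would run the above for a sequence $x_j\to\infty$ (with $y_j=x_j^2$) and define $S$ to be the set of all primes $q$ representable as $n^2+n+p$ with $p$ a good prime at some scale. Every good $p\le y$ contributes at least one prime $q$ with $q=n^2+n+p\le x^2+x+p\le 2y$, say, and distinct $p$ give distinct $q$ only up to bounded multiplicity — more carefully, a fixed $q$ has at most $O(\sqrt q)$ representations $q=n^2+n+p$, so the primes produced are not too heavily overcounted; but for a clean density statement it is cleaner to argue on the complement. The set of primes $q\le x^2$ \emph{not} in $S$ must, for each such $q=n^2+n+p$ with $n=\lfloor\sqrt{q}\rfloor$ roughly, force the associated $p=q-n^2-n$ to be non-prime or bad or have $4p-1$ non-squarefree; counting shows the exceptional $q$ number $o(\pi(x^2))$. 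Thus $|\{q\le X:\ q\in S\}|/\pi(X)\to 1$.

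The main obstacle I anticipate is the bookkeeping in this last paragraph: the variance bound controls an average over $p$ (equivalently over $\kappa$), whereas density is a statement about $q$, and the map $p\mapsto q$ is many-to-one in a way that depends on $n$. The cleanest route is probably: for each large $X$, consider primes $q\in(X/2,X]$; write $q=n^2+n+p$ where $n$ is forced (for each $n$ with $n^2+n<q$, $p_n:=q-n^2-n$ is determined), and observe $q\in S$ as soon as $p_{n}$ is a good squarefree-$\kappa$ prime for even one admissible $n$; the number of $q$ for which \emph{all} admissible $n$ fail is bounded, via Theorem \ref{r1} and Lemma \ref{lemmamyrski}, by $O(X/(\log X)^{B-3})=o(\pi(X))$. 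One must also verify $S(\kappa)\gg 1$ uniformly (it is, since $\left(\tfrac{-\kappa}{p}\right)=\left(\tfrac{1}{p}\right)=1$ is impossible to make the product vanish — the factors are bounded away from $0$), which is where the specific shape $\kappa=4p-1$, making $-\kappa\equiv 1\pmod 4$, quietly helps.
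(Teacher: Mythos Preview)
Your overall strategy---apply Chebyshev to the variance bound of Theorem~\ref{r1}, discard prime powers, and invoke Mirsky's Lemma~\ref{lemmamyrski}---is exactly the paper's route. Two points deserve comment.

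First, your claim that $S(\kappa)\gg 1$ uniformly is false. The Euler factors are indeed bounded away from~$0$ individually, but the product over primes $p$ with $\bigl(\tfrac{-\kappa}{p}\bigr)=1$ can be arbitrarily small; $S(\kappa)$ is essentially an $L(1,\chi)$ value and is only known to satisfy $S(\kappa)\gg 1/\log\kappa$. The paper uses precisely this weaker bound, $|S(\kappa)|\gg 1/\log x$ (citing the introduction of \cite{baier}), and sets the ``bad'' threshold accordingly. With that correction your Chebyshev step gives $N(y)\ll y/(\log x)^{B-2}$, matching the paper.

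Second, you rightly flag the $p$-versus-$q$ bookkeeping as the main obstacle, and in fact you are more scrupulous here than the paper itself. The paper's proof, read literally, concludes that a density-$1$ set of primes $p$ (with $4p-1$ squarefree) has $n^2+n+p$ prime for some $n\le x$, and then declares the density-$1$ statement about representable primes $q$ proved---without ever passing from the set of good $p$ to the set of primes $q$ they produce. Your sketch of how to close this gap (count primes $q\le X$ for which every candidate $p_n=q-n^2-n$ fails) is the right instinct but is not carried through: Theorem~\ref{r1} averages over $p$, not over $q$, and it is not immediate that ``all $p_n$ fail'' for at most $o(\pi(X))$ primes $q$. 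So on this point you have correctly located a lacuna that the paper leaves open; your proposed fix is not yet a proof either, but it is an honest identification of where the real work lies.
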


\begin{proof}
We start with Theorem \ref{r1}.
Let $x,y\in\mathbb{R}$ with $x^2\log(x)^{-A}\leq y\leq x^2$.
The equality in the statement of  Theorem \ref{r1} also holds if in the inner sum we replace $\Lambda(n^2+n+p)$ by $\log(n^2+n+p)$ and keep only the $n$'s such that $n^2+n+p$ is prime. 
For, the remaining prime powers (on which the $\Lambda$ function is supported) will contribute $O\left(\frac{\sqrt{x}}{\log(x)}\right)$ in the inner sum before squaring, and hence $O\left(\frac{x^{3/2}}{\log(x)^2}\right)$ in the inner sum after squaring, hence $O\left(\frac{x^{3/2}y}{\log(x)^2}\right)$ over the full sum, which is negligible
in comparison to $O\left(\frac{x^2y}{(\log(x))^B}\right)$.
Thus

\begin{equation}\label{key}
\sum_{\begin{array}{c}\kappa\leq y\\ \mu^2(\kappa)=1\end{array}}
\left|\sum_{\begin{array}{c}n\leq x\\ n^2+n+p \textrm{ prime}\end{array}}  \log (n^2+n+p)-
S(\kappa)\frac{x}{2}\right|^2=O\left(\frac{x^2y}{(\log(x))^B}\right).
\end{equation}

Take $\kappa=4p-1$, with $p$ prime. If for $x\gg 1$ there does not exist $n$ with $2n+1\leq x$ and such that $n^2+n+p$ is prime, then the inner sum corresponding to such $p$ is 
$|S(\kappa)|x/2$. 
As in the introduction of \cite{baier},
$|S(\kappa)|\gg 1/\log(x)$. 

Thus, letting $N(y)$ be the number of primes $p\leq y/4$ such that $4p-1$ is square-free and $n^2+n+p$ is not prime for any $n$, by  \eqref{key} we have
$$
N(y)x^2/\log(x)^2\ll x^2y/(\log(x))^B.
$$
giving
$$
N(y)\ll y/(\log(x))^{B-2}.
$$
Because of the domain to which $x$ and $y$ belong we get 
$$
N(y) \ll y/(\log(y))^{B-2}.
$$

By Lemma \ref{lemmamyrski}  the number of primes $p\leq y/4$ such that $4p-1$ is square-free and $n^2+n+p$ is prime for some $n$ is   
$\gg y/\log(y)-N(y).$
Taking $B=4$ we get that the number of these primes is
$$\gg y/log(y)-y/\log(y)^2.$$ 
Dividing by $\pi(y)$, we obtain, by using the prime number theorem, that the density of these primes is $1$.
\end{proof}

\section{Relation of  Conjecture \ref{conj} with some open questions}\label{twins}

In the previous section we showed that 'most' primes can be written
as $n^2+n+p$ where $p$ is a prime.
Recall Conjecture \ref{conj}, which is the stronger statement that every prime $q$ can be written as $n^2+n+p$ where 
$p$ is a twin prime.

\subsection{A conjectural lower bound for twin primes}

For a prime $q\geq 5$ such that $q=p+n(n+1)$ with $p$ a twin prime and $n\geq 1$, let us denote by $p_q$ the smallest twin prime such that there exists some $n_q\geq 1$ with $q=p_q+n_q^2+n_q$. Notice that $p_q$ determines $n_q$: indeed, if $p_q+n_q^2+n_q=p_q+m_q^2+m_q$, since the function $x^2+x$ is injective for $x\geq 0$, we have $n_q=m_q$. 

Next, we give an easy lemma.

\begin{lem}Let $q'<q$ be prime numbers expressible as $q=p_q+n_q(n_q+1)$ and $q'=p_{q'}+n_{q'}(n_{q'}+1)$, with minimal $p_q$ and $p_{q'}$. If $n_q=n_{q'}$, then $p_{q'}<p_q$.
\label{easylemma}
\end{lem}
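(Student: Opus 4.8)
The plan is to compare the two representations $q = p_q + n_q(n_q+1)$ and $q' = p_{q'} + n_{q'}(n_{q'}+1)$ directly, using the hypothesis $n_q = n_{q'}$. Write $n = n_q = n_{q'}$. Subtracting the two expressions gives
$$
q - q' = (p_q + n(n+1)) - (p_{q'} + n(n+1)) = p_q - p_{q'}.
$$
Since $q' < q$ by hypothesis, the left-hand side is strictly positive, so $p_q - p_{q'} > 0$, i.e.\ $p_{q'} < p_q$, which is exactly the claim. This is genuinely a one-line computation once the common value $n$ is substituted; there is no real obstacle, the only thing to be careful about is that the statement is about the \emph{minimal} twin primes $p_q, p_{q'}$ — but minimality plays no role in the argument, it is used only to guarantee that $p_q$ and $p_{q'}$ are well-defined and that $n_q, n_{q'}$ are uniquely determined by them (as noted just before the lemma, $x \mapsto x^2+x$ is injective on $x \geq 0$).

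If one wants to be maximally explicit, I would phrase it as: by definition of $p_q$ and $p_{q'}$ there exist integers $n_q, n_{q'} \geq 1$ with $q = p_q + n_q^2 + n_q$ and $q' = p_{q'} + n_{q'}^2 + n_{q'}$; the assumption $n_q = n_{q'}$ makes the quantities $n_q^2 + n_q$ and $n_{q'}^2 + n_{q'}$ equal, so $q - q' = p_q - p_{q'}$, and $q > q'$ forces $p_q > p_{q'}$.

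The only conceivable subtlety worth a sentence is whether $n_q$ and $n_{q'}$ could fail to be the ones appearing in the minimal-$p$ representation in a way that matters — but it does not, since the lemma's hypothesis is stated in terms of the $n$'s attached to the minimal $p$'s, so there is nothing to reconcile. Hence the proof is immediate and I would not expect it to occupy more than two or three lines in the final text.
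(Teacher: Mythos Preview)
Your proof is correct and is essentially identical to the paper's own argument: both substitute $n_q=n_{q'}$ into the two representations, obtain $q-q'=p_q-p_{q'}$, and conclude from $q>q'$ that $p_q>p_{q'}$. Your remark that minimality plays no role in the inequality itself is accurate and matches the paper, which likewise does not invoke minimality in the proof.
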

\begin{proof}If $n_q=n_{q'}$, then
$$
q=p_q+n_q^2+n_q=p_q+n_{q'}^2+n_{q'}=p_q-p_{q'}+q',
$$
hence $0<q-q'=p_q-p_{q'}$.
\end{proof}

\begin{thm}If Conjecture \ref{conj} is true, then 
$$
\pi_2(x)\gg\frac{\sqrt{x}}{\log(x)}.
$$
In particular, there exist infinitely many twin primes.
\label{implica}
\end{thm}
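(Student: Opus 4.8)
The plan is to bound, for each prime $q \leq x$ covered by Conjecture \ref{conj}, the size of the twin prime $p_q$ in the representation $q = p_q + n_q(n_q+1)$, and then show that the map $q \mapsto p_q$ cannot collapse too many primes onto a single twin prime. First I would observe that since $p_q \geq 5$ and $n_q \geq 1$, we have $q = p_q + n_q(n_q+1) > n_q^2$, so $n_q < \sqrt{q} \leq \sqrt{x}$; in particular there are at most $\sqrt{x}$ possible values of $n_q$ as $q$ ranges over primes up to $x$. This is the key pigeonhole input.

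Next I would stratify the primes $q \in (x/2, x]$ (say) according to the value of $n_q$. For a fixed value $n$, consider the set of primes $q \leq x$ with $n_q = n$. By Lemma \ref{easylemma}, if $q' < q$ are two such primes then $p_{q'} < p_q$; hence the assignment $q \mapsto p_q$ is \emph{injective} on each fixed-$n_q$ class, and moreover every such $p_q$ is a twin prime with $p_q \leq q \leq x$. Therefore the number of primes $q \leq x$ with a given value $n_q = n$ is at most $\pi_2(x)$. Summing over the at most $\sqrt{x}$ possible values of $n$, and using that Conjecture \ref{conj} guarantees every prime $q \in (x/2,x]$ has such a representation, we get
$$
\pi(x) - \pi(x/2) \;\leq\; \sum_{1 \leq n < \sqrt{x}} \#\{q \leq x : n_q = n\} \;\leq\; \sqrt{x}\,\pi_2(x).
$$
By the prime number theorem $\pi(x) - \pi(x/2) \gg x/\log x$, so $\pi_2(x) \gg (x/\log x)/\sqrt{x} = \sqrt{x}/\log(x)$, which is the claimed bound; that $\pi_2(x) \to \infty$ forces infinitely many twin primes.

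The main obstacle, such as it is, lies in making sure the injectivity argument is applied to the right set: Lemma \ref{easylemma} only compares two primes with \emph{equal} $n_q$, so the stratification by the value of $n_q$ is essential, and one must be slightly careful that $n_q$ is well-defined (which follows from the injectivity of $x \mapsto x^2+x$ on $x \geq 0$, already noted before the lemma) and that $p_q$ really is a twin prime $\leq x$ so it is counted by $\pi_2(x)$. Everything else is the elementary counting above together with the prime number theorem, so no serious analytic difficulty arises; the content is entirely in the pigeonhole on the bounded range of $n_q$.
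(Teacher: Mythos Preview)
Your proof is correct and is essentially the same double-counting argument as the paper's: you partition the primes $q\le x$ by the value of $n_q$ and bound each class by $\pi_2(x)$ via Lemma~\ref{easylemma}, whereas the paper partitions by the value of $p_q$ and bounds each class by $O(\sqrt{x})$; either way one arrives at $\pi(x)\ll\sqrt{x}\,\pi_2(x)$ and concludes by the prime number theorem. Your stratification in fact uses Lemma~\ref{easylemma} more directly than the paper's own write-up does.
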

\begin{proof}Assume the conjecture to be true and for every prime number $q\geq 5$, consider the 
mapping $q\mapsto p_q$. First, the number of integers $m$ such that $m^2+m\leq x$ is $O(\sqrt{x})$. Second, let us split the set of primes less than or equal $x$ as the union of the sets $C_p(x):=\{q\leq x: p_q=p\}$, which are disjoint because of Lemma \ref{easylemma}. Then, for each twin prime $p\leq x$, $|C_p|$ is $O(\sqrt{x})$ and
$$
\pi(x)\ll\pi_2(x)\sqrt{x}.
$$
Since $\pi(x)\gg\frac{x}{\log(x)}$, the result follows.
\end{proof}




\section{Computational evidence}\label{compev}

Our computer verifications using Pari/GP show that, for the first 100 million primes $q$, there exists a twin prime $p_q$ and an integer $n_q\geq 1$ such that $q=p_q+n_q^2+n_q$ with $p_q>q^{1/3}$ and such that $n_q$ does not grow very fast (indeed it seems to grow with the logarithm of $q$). The fact of $p_q$ growing with $q$ is in accordance with Theorem \ref{implica}. In fact, for primes $q$ satisfying our conjecture, we have the following bounds.

\begin{lem}If Conjecture \ref{conj} is true, we have that, for primes $q$ large enough,
$$
n_q\leq\sqrt{q}.
$$
\label{easylemma2}
\end{lem}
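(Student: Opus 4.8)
The plan is to prove Lemma \ref{easylemma2} directly from the definitions, using nothing more than the fact that the twin prime $p_q$ is at least $2$ (or at least $5$, since $p_q \geq 5$ in Conjecture \ref{conj}). Recall that if Conjecture \ref{conj} holds, then for every prime $q \geq 5$ we may write $q = p_q + n_q^2 + n_q$ with $p_q$ a twin prime and $n_q \geq 1$; in particular $p_q \geq 1$, so $n_q^2 + n_q \leq q - p_q < q$. From $n_q^2 + n_q < q$ and $n_q \geq 1$ we get $n_q^2 < q$, hence $n_q < \sqrt q$, which already gives the claimed bound $n_q \leq \sqrt q$ for all such $q$ (not merely for $q$ large).

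The one subtlety is that the lemma is stated with ``for primes $q$ large enough,'' which is a strictly weaker conclusion than what the above gives, so there is essentially nothing delicate to navigate: the inequality $n_q^2 + n_q \leq q - p_q$ is immediate from the representation, and dropping the nonnegative quantity $n_q + p_q$ (or just $p_q$ and noting $n_q \geq 1$ forces strict inequality in the right place) yields $n_q^2 < q$. If one wishes to match the ``large enough'' phrasing literally, one can simply remark that $n_q^2 + n_q < q$ implies $n_q^2 < q$ for all $q$, a fortiori for large $q$.

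First I would restate the hypothesis: assuming Conjecture \ref{conj}, fix a prime $q \geq 5$ and write $q = p_q + n_q^2 + n_q$ with $p_q \geq 5$ a twin prime and $n_q \geq 1$. Next I would record the chain of inequalities $n_q^2 < n_q^2 + n_q = q - p_q \leq q - 5 < q$. Finally I would conclude $n_q < \sqrt q$, hence $n_q \leq \sqrt q$, completing the proof. No auxiliary results beyond the definition of $p_q, n_q$ are needed; in particular Lemma \ref{easylemma} is not required here.

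The hard part, such as it is, will be resisting the temptation to overcomplicate: the estimate is purely formal once one writes down the representation $q = p_q + n_q(n_q+1)$ and uses positivity of the summands. A short remark could be added that the same argument gives the sharper $n_q \leq \sqrt{q-5}$, consistent with the computational observation in Section \ref{compev} that $n_q$ in fact appears to grow only like $\log q$, so the bound $\sqrt q$ is far from tight but is all that follows rigorously from the conjecture alone.
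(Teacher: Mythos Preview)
Your argument is correct and is essentially the paper's own: both start from $q=p_q+n_q^2+n_q$, drop the positive $p_q$, and extract $n_q\le\sqrt{q}$; the paper solves the quadratic to get $n_q\le\frac{-1+\sqrt{4q-11}}{2}\sim\sqrt{q}$, while you more directly drop the linear term to get $n_q^2<q$. One small inaccuracy: the smallest twin prime is $3$, not $5$ (the $\ge 5$ in Conjecture~\ref{conj} constrains $q$, not $p$), so your claim ``$p_q\ge 5$'' and the step ``$q-p_q\le q-5$'' should read $p_q\ge 3$ and $q-p_q\le q-3$; this does not affect the conclusion.
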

\begin{proof}
If Conjecture \ref{conj} holds, the for a prime $q\geq 5$, we have $q=p_q+n_q(n_q+1)\geq 3+n_q^2+n_q$, hence $n_q\leq \frac{-1+\sqrt{4q-11}}{2}\sim\sqrt{q}$, for $q$ big enough.
\end{proof}

As mentioned before, our computer tests outperform this upper-bound. As for a conditional lower bound for $n_q$ we have the following:

\begin{prop}For $q\geq 5$ satisfying Conjecture \ref{conj}, either $p_q\geq q/2$ or $n_q\in[\frac{\sqrt{2q}}{2},\sqrt{q}]$.
\label{cases2}
\end{prop}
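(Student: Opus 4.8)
The plan is to unwind the definitions and solve the quadratic constraint $q = p_q + n_q^2 + n_q$ for $n_q$, treating $p_q$ as a parameter in the range $5 \le p_q < q$. First I would rewrite the relation as $n_q^2 + n_q = q - p_q$, so that $n_q = \tfrac{-1 + \sqrt{1 + 4(q-p_q)}}{2}$; this is the same computation as in Lemma \ref{easylemma2}, but now we keep $p_q$ explicit rather than bounding it below by $3$. The function $p \mapsto \tfrac{-1+\sqrt{1+4(q-p)}}{2}$ is strictly decreasing in $p$, so $n_q$ is large exactly when $p_q$ is small and vice versa; the two cases in the statement should correspond precisely to the dichotomy $p_q \ge q/2$ versus $p_q < q/2$.

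The key step is then: assume $p_q < q/2$, and deduce the stated interval for $n_q$. From $p_q < q/2$ we get $q - p_q > q/2$, hence $n_q^2 + n_q > q/2$, which forces $n_q > \tfrac{-1 + \sqrt{1 + 2q}}{2}$; for $q$ large this lower bound is $\sim \sqrt{q/2} = \tfrac{\sqrt{2q}}{2}$, and one checks the inequality $n_q \ge \tfrac{\sqrt{2q}}{2}$ holds outright (the $\sqrt{2q}/2$ in the statement is meant as this asymptotic lower end, possibly absorbing a harmless rounding). The upper bound $n_q \le \sqrt{q}$ is already Lemma \ref{easylemma2}, valid for $q$ large enough under Conjecture \ref{conj}, so it carries over verbatim. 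Putting the two bounds together gives $n_q \in [\tfrac{\sqrt{2q}}{2}, \sqrt{q}]$, which is the second alternative. Conversely, if $p_q \ge q/2$ we are in the first alternative and there is nothing to prove, so the dichotomy is exhaustive.

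Concretely, the proof I would write is: assume Conjecture \ref{conj}, fix a prime $q \ge 5$, and suppose $p_q < q/2$ (otherwise we are done). Then $q - p_q = n_q^2 + n_q > q/2$, so $n_q(n_q+1) > q/2$, giving $n_q^2 > q/2 - n_q$; combined with the crude upper bound $n_q \le \sqrt{q}$ from Lemma \ref{easylemma2} one gets $n_q^2 > q/2 - \sqrt{q}$, hence $n_q > \sqrt{q/2 - \sqrt{q}} \ge \tfrac{\sqrt{2q}}{2} - O(1)$, and for $q$ large enough this yields $n_q \ge \tfrac{\sqrt{2q}}{2}$. Together with $n_q \le \sqrt{q}$ this places $n_q$ in the claimed interval.

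The main obstacle — really the only subtlety — is making the lower endpoint $\tfrac{\sqrt{2q}}{2}$ exact rather than merely asymptotic: the honest inequality coming out of $n_q(n_q+1) > q/2$ is $n_q > \tfrac{-1+\sqrt{1+2q}}{2}$, which is slightly smaller than $\tfrac{\sqrt{2q}}{2}$, so one must either argue that the gap is closed for $q$ large (since $\sqrt{1+2q} - \sqrt{2q} \to 0$ while the integrality of $n_q$ and the strictness of the inequality give room to spare) or simply read the statement as an asymptotic lower bound, consistent with the phrasing ``for $q$ large enough'' used throughout Section \ref{compev}. Everything else is a routine manipulation of the quadratic formula and monotonicity in $p_q$.
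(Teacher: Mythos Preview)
Your argument is essentially identical to the paper's: both split on whether $p_q\ge q/2$, and in the complementary case use $n_q^2+n_q=q-p_q\ge q/2$ together with Lemma~\ref{easylemma2} to sandwich $n_q$. The paper handles the lower endpoint exactly as you anticipated, writing $n_q\ge\frac{-1+\sqrt{1+2q}}{2}\sim\frac{\sqrt{2q}}{2}$ and treating the bound as asymptotic for $q$ large, so your discussion of that ``subtlety'' is already the paper's resolution.
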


\begin{proof}For a prime $q$ such that $q=p_q+n_q^2+n_q$, by Lemma \ref{easylemma2}, $n\leq\sqrt{q}$, for $q$ big enough. On the other hand, either $p_q\leq q/2$ or $n_q^2+n_q\leq q/2$. When $n_q^2+n_q\leq q/2$, $q\leq p_q+q/2$, hence $p_q\geq q/2$, and as it is easy to see, these two facts are equivalent. If, on the other hand, $p_q\leq q/2$, it follows that $n_q^2+n_q=q-p_q\geq q/2$, hence $n_q\geq \frac{-1+\sqrt{1+2q}}{2}\sim\sqrt{2}\sqrt{q}/2$, for $q$ large.
\end{proof}

\end{document}